\newtheorem{theorem}{Theorem}[section]
\newtheorem{corollary}[theorem]{Corollary}
\newtheorem{lemma}[theorem]{Lemma}
\theoremstyle{remark}
\newtheorem*{remark}{Remark}
\numberwithin{equation}{section}
\DeclareMathOperator{\crit}{crit}
\DeclareMathOperator{\diam}{diam}
\DeclareMathOperator{\aut}{Aut}
\DeclareMathOperator{\loc}{loc}
\DeclareMathOperator{\GCE}{GCE}
\title{Analytic mappings of the unit disk with \\ bounded compression}
\author{Oleg Ivrii and Artur Nicolau}
\date{July 20, 2025}
\begin{document}
 
\maketitle

\begin{abstract}
In this paper, we study analytic self-maps of the unit disk for which the hyperbolic diameters of the images of hyperbolic balls of radius 1 are uniformly bounded below. We give several characterizations of such maps involving the behaviour along geodesic rays, Aleksandrov-Clark measures, zero sets and critical sets.
\end{abstract}

\section{Introduction}

Let $\mathbb{D} = \{ z \in \mathbb{C}: |z| < 1 \}$ be the unit disk in the complex plane and 
$$\lambda_{\mathbb{D}}(z) |dz| = \frac{2 |dz|}{1-|z|^2}
$$
be the hyperbolic metric on $\mathbb{D}$. We denote the  hyperbolic distance between two points $z, w \in \mathbb{D}$ by $d_h (z, w)$ and the hyperbolic ball centered at $z \in \mathbb{D}$ of radius $R$ by $B_h (z, R) = \{w \in \mathbb{D}: d_h (w,z) < R\}$.

Let $F: \mathbb{D} \rightarrow \mathbb{D}$ be an analytic self-map of the unit disk. The pullback of the hyperbolic metric under $F$ is $\lambda_{F}(z) |dz| = 2 |dz| \cdot |F'(z)| / (1-|F(z)|^2)$, $z \in \mathbb{D}$. The Schwarz lemma says that any analytic self-map $F$ of the unit disk contracts the hyperbolic metric, that is, $\lambda_F (z) \leq \lambda_{\mathbb{D}} (z)$ for any $z \in \mathbb{D}$. Moreover, equality at a single point implies equality at every point in the unit disk and that $F$ is an automorphism of $\mathbb{D}$.

We say that an analytic self-map of the unit disk has {\em Bounded Compression} (BC) if the hyperbolic diameters of the images of hyperbolic balls of radius 1 are uniformly bounded below. In this paper, we give a number of equivalent characterizations of BC mappings involving the behaviour along geodesic rays, Aleksandrov-Clark measures, zero sets and critical sets.

A discrete set of points in the unit disk is {\em separated} if the hyperbolic distance between any two points in the set is bounded below by a positive constant. More generally, a set is {\em quasi-separated} if there exists an $N$ so that every ball of hyperbolic radius 1 contains at most $N$ elements of the set. It is not difficult to see that a set is quasi-separated if and only if it is a finite union of separated sets. 

For an arc  $I \subset \partial \mathbb{D}$ of length $|I| < 1/2$, we write
$$
Q_I = \{ z \in \mathbb{D} : z/|z| \in I, \, 1 - |I| < |z| < 1 \}
$$
for the Carleson square with base $I$. Given a Carleson square $Q=Q_I$, we denote its ``side length'' by $\ell (Q) = |I|$. We denote the midpoint of $I$ by $\xi_I \in \partial \mathbb{D}$ and the ``center'' of the Carleson square by $z_Q = z_I = (1-|I|/2) \xi_I \in \mathbb{D}$.

Let $\mu$ be a measure on the unit circle. We say that an arc $I \subset \partial \mathbb{D}$ is {\em heavy} for $\mu$ if
$$
\frac{\mu(I)}{|I|} \ge \frac{1}{100} \cdot u(z_I),
$$
where $u$ is the Poisson extension of $\mu$. Note that the converse inequality with 100 replaced by a different absolute constant always holds.

A measure $\sigma$ on the unit disk is a {\em Carleson measure} if the collection of numbers $\{ \sigma(Q) / \ell(Q) \}$ as $Q$ ranges over Carleson squares is bounded. We say that a Carleson square $Q$ is {\em heavy} for $\sigma$ if
$$
\frac{\sigma(Q)}{\ell(Q)} \ge \frac{1}{100} \int_{\mathbb{D}} \frac{1-|z_Q|^2}{|1-\overline{z_Q} w|^2} \, d\sigma(w).
$$
Note that the converse inequality with 100 replaced by a different absolute constant always holds.

\begin{theorem}
\label{mainth}
Let $F$ be an analytic self-map of the unit disk. The following conditions are equivalent:

\begin{enumerate}[label={\normalfont(\arabic*)}, ref=\arabic*]
\item \label{item:hyperbolic-diameter}
Bounded compression: There exists a constant $c_1 > 0$ such that for any $z \in \mathbb{D}$,
\begin{equation*}
\label{eq:diameter-condition}
\diam_h \bigl (F(B_h(z,1)) \bigr ) \ge c_1.
\end{equation*}

\item \label{item:hyperbolic-descent} Hyperbolic descent: For every $0 < \varepsilon < 1$, there exists an integer $N = N(\varepsilon) > 0$ with the property that for any Carleson square $Q \subset \mathbb{D}$, there exists a Carleson sub-square $Q' \subset Q$ with $\ell(Q') = 2^{-N} \ell(Q)$ such that
$$
1 - |F(z_{Q'})| \le \varepsilon ( 1 - |F(z_Q)|).
$$

\item \label{item:quasigeodesic-condition} Quasigeodesic condition: There exist constants $C > 0$ and $0 < s \le 1$ such that for any $z \in \mathbb{D}$, there exists a geodesic ray $\gamma = \gamma(z)$ emanating from $z$ with
\begin{equation}
\label{eq:quasi-geodesic}
d_h(F(w_1), F(w_2)) \ge s \, d_h(w_1, w_2) - C, \qquad w_1, w_2 \in \gamma.
\end{equation}

\item \label{item:ac-measures} Aleksandrov-Clark measures: Let $\mu = \mu_\alpha$ be one of the Aleksandrov-Clark measures of $F$. For every $\varepsilon > 0$, there exists a $\delta = \delta(\varepsilon) > 0$ such that every heavy arc $I \subset \partial \mathbb{D}$ has a sub-arc $J \subset I$ with 
$$
|J| \ge \delta |I| \qquad \text{and} \qquad \frac{\mu(J)}{|J|} \le \varepsilon \cdot \frac{\mu(I)}{|I|}.
$$

\item \label{item:zero-sets} Zero sets: 
\begin{enumerate}
\item[{\em (a)}] $F$ is a Blaschke product.
\item[{\em (b)}] $\sigma = \sum_{F(z) = 0} (1-|z|^2) \delta_{z}$ is a Carleson measure.
\item[{\em (c)}] For every $\varepsilon > 0$, there exists a $\delta = \delta(\varepsilon) > 0$ such that for any heavy
Carleson square $Q$, there exists $Q' \subset Q$ with $\ell(Q') \ge \delta \, \ell(Q)$ such that
$$
\frac{\sigma(Q')}{\ell(Q')} \le \varepsilon \cdot \frac{\sigma(Q)}{\ell(Q)}.
$$
\end{enumerate}

\item \label{item:critical-points} Critical points: $F$ is a maximal Blaschke product whose  critical set $C = \crit F$ is quasi-separated and has uniform upper density $D^+(C) < 1$.
\end{enumerate}
\end{theorem}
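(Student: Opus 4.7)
The plan is to close the cycle
$(1) \Leftrightarrow (3) \Rightarrow (2) \Leftrightarrow (4) \Rightarrow (5) \Leftrightarrow (6) \Rightarrow (1)$,
handling the two-way arrows separately and using the one-way links to bridge. For $(1) \Leftrightarrow (3)$, the direction $(3) \Rightarrow (1)$ is essentially formal, since a long enough sub-segment of $\gamma(z)$ sits in a ball of hyperbolic radius $R \ge 1$ and BC for radius $R$ is equivalent to BC for radius $1$ by a covering argument. For $(1) \Rightarrow (3)$, I would build $\gamma(z)$ by iterating BC: produce a sequence $z = z_0, z_1, z_2, \dots$ with $z_{k+1} \in B_h(z_k, 1)$ and $d_h(F(z_k), F(z_{k+1})) \ge c_1$, then pass to a subsequential limit in the Gromov boundary. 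A multi-scale selection is needed to ensure the $z_k$ track a single geodesic ray, so that the quasi-isometric bound descends from the broken line to the whole ray.

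The passage $(3) \Rightarrow (2)$ is quantitative: following $\gamma(z_Q)$ for hyperbolic time $L \asymp (\log(1/\varepsilon) + C)/s$ reaches a point $z'$ with $d_h(F(z_Q), F(z')) \ge \log(1/\varepsilon)$, lying in a Carleson sub-square of scale $2^{-N}\ell(Q)$ for $N \asymp L$; after orienting $\gamma$ so that $F$ moves hyperbolically toward $\partial \mathbb{D}$, one gets $1-|F(z')| \lesssim \varepsilon (1-|F(z_Q)|)$. For $(2) \Leftrightarrow (4)$, I identify the density $\mu_\alpha(I_Q)/|I_Q|$ with a quantity comparable to $1-|F(z_Q)|$ for a suitable rotation $\alpha$ and recognize heavy arcs as heavy Carleson squares under the natural projection $\mathbb{D} \to \partial \mathbb{D}$; the two descent conditions are then the same statement in dual language.

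For $(4) \Rightarrow (5)$, I first argue that $F$ is inner: any absolutely continuous mass in some $\mu_\alpha$ would produce heavy arcs at arbitrarily small scales whose density is bounded below, contradicting the sub-arc decay. Since $F$ is then a bounded inner function, it is a Blaschke product. The Carleson condition $(5)(b)$ and the descent $(5)(c)$ transfer from $(4)$ by a stopping-time argument relating the arc density of $\mu_0$ to the Carleson measure of its atoms, which are precisely the zeros of $F$. The equivalence $(5) \Leftrightarrow (6)$ uses the correspondence for maximal Blaschke products: the critical set is essentially the zero set of another such product carrying comparable Carleson-type and quasi-separation data, and the bound $D^+(C) < 1$ matches the appropriate finiteness on that side. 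Finally $(6) \Rightarrow (1)$ is direct: quasi-separation combined with $D^+(C) < 1$ ensures $|F'|$ is bounded away from $0$ on a definite hyperbolic-area portion of $B_h(z,1)$, forcing $F(B_h(z,1))$ to have hyperbolic diameter bounded below.

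The main obstacle I anticipate is $(4) \Rightarrow (5)$: extracting rigid analytic structure (the Blaschke factorization plus quantitative zero-set control) from purely measure-theoretic descent of the Aleksandrov-Clark data is delicate, and the stopping-time argument matching arc decay of $\mu$ to the spatial distribution of zeros will require careful bookkeeping of stopping generations. A secondary challenge is $(1) \Rightarrow (3)$, where producing an actual geodesic ray -- not merely a quasi-geodesic broken line -- requires a multi-scale selection to align the chosen directions across scales, presumably with a tree/compactness argument on the space of boundary directions.
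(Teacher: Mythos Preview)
Your cycle has several genuine gaps. The most structural one is that you never bring in the paper's central analytic tool: Green's identity applied to $\log(1-|F|^2)$ on a truncated Carleson box (and, for the Aleksandrov--Clark side, to $\log u$ where $u$ is the Poisson extension of $\mu_\alpha$). This is what actually produces hyperbolic descent. Your proposed substitute $(3)\Rightarrow(2)$ does not work as written: the ray $\gamma(z_Q)$ may land at a boundary point outside the base arc of $Q$, so the point $z'$ reached after hyperbolic time $L$ need not lie in any sub-square of $Q$ at all; and even when $z'\in Q$, the bound $d_h(F(z_Q),F(z'))\ge sL-C$ does not force $1-|F(z')|\le\varepsilon\,(1-|F(z_Q)|)$, since $F\circ\gamma$ may first move toward the origin. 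You cannot ``orient $\gamma$'' to fix this---$\gamma$ is a one-sided ray with no free direction. The paper instead proves $(1)\Rightarrow(2)$ directly: Green's formula on $Q_N$ shows the average of $\log(1-|F|)$ over the inner edge drops by $\gtrsim N$, so some dyadic descendant realizes the descent; then $(2)\Rightarrow(3)$ is the easy direction (iterate to get nested squares and take the geodesic to their common boundary point).

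Your $(2)\Leftrightarrow(4)$ rests on a false identification: $\mu_\alpha(I)/|I|$ is comparable to $u(z_I)=(1-|F(z_I)|^2)/|\alpha-F(z_I)|^2$, not to $1-|F(z_I)|$, and these differ by arbitrarily large factors when $F(z_I)$ is near $\alpha$. The correct bridge is the identity $2\,D_hF=(1-|z|^2)\,|\nabla u|/u$, after which one reruns the Green argument with $\log u$. Downstream there are further problems. In $(4)\Rightarrow(5)$ you assert ``$F$ inner $\Rightarrow$ $F$ is a Blaschke product,'' which is false (singular inner factors); the paper instead shows directly from BC that $F$ is an indestructible Blaschke product. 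Your $(6)\Rightarrow(1)$ cannot be read off from quasi-separation and $D^+(C)<1$ alone: one must use the \emph{maximality} of $F$, comparing $\lambda_F(0)$ with $\lambda_h(0)$ for a competitor $h(z)=\frac{1}{I_a}\int_0^z f$ where $f$ vanishes on $C$, $|f(z)|<(1-|z|)^{-a}$, $|f(0)|\gtrsim 1$, supplied by Seip's density lemma. And your $(5)\Leftrightarrow(6)$ sketch (``critical set is the zero set of another such product'') is not the mechanism; the paper pairs a Jensen-type inequality bounding $\sum_{c\in C\cap B(0,r)}\log(1/|c|)$ by $\int_{|z|<r}(1-D_h^2F)\,dA/(1-|z|)$ with a separate maximality criterion (if $\lambda_I\asymp\lambda_J$ with $I$ inner then $J=\tau\circ I$), and that is where $D^+(C)<1$ and the maximal-Blaschke hypothesis genuinely do their work.
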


We now clarify the terms involved in the theorem above and present a number of additional remarks:

\begin{enumerate}[wide]

\item {\bf Aleksandrov-Clark measures.}  Given an analytic mapping $F$ from the unit disk to itself and a point $\alpha \in \partial \mathbb{D}$, the function $(\alpha + F)/(\alpha - F)$ has positive real part. Consequently, there exists a positive measure $\mu_\alpha = \mu_\alpha (F)$ on the unit circle and a constant $C_{\alpha} \in \mathbb{R}$ such that
\begin{equation}
\label{eq:ACMeasureHerglotz}
\frac{\alpha + F(z)}{\alpha - F(z)} = \int_{\partial \mathbb{D}} \frac{\xi + z}{\xi - z}\, d\mu_\alpha(\xi) + iC_{\alpha}, \qquad z \in  \mathbb{D}. 
\end{equation}
The measures $\{\mu_\alpha\colon \alpha \in \partial \mathbb{D}\}$ are called the {\em Aleksandrov-Clark} measures of the function $F$. These measures were introduced by D.~Clark in relation with operator theory and were thoroughly  investigated by A.~B.~Aleksandrov who recognized their importance in function theory.
We refer the reader to the surveys  \cite{PolSar, Saks} as well as \cite[Chapter~IX]{cima} for further properties of Aleksandrov-Clark measures and a wide range of applications.

\item {\bf Maximal Blaschke products.} If $C$ is the critical set of some analytic self-map of the unit disk, then it is the critical set of a particular Blaschke product $F = F_C$ called the {\em maximal Blaschke product} which maximizes $\lambda_F(z)$, $z \in \mathbb{D} \setminus C$, out of all analytic self-maps of the unit disk with critical set $C$. The maximal Blaschke product $F_C$  is uniquely determined up to post-composition with a M\"obius transformation. For general properties of maximal Blaschke products, we refer the reader to \cite{kraus, KR-survey, KR-maximal}.

\item {\bf Uniform upper density.} Following K.~Seip \cite{seip}, the {\em uniform upper density} of a discrete set $C \subset \mathbb{D}$ is given by
$$
D^+(C) = \limsup_{r \to 1} \, \sup_{a \in \mathbb{D}} \, D(m_{a \to 0}(C), r),
$$
where
$$
D(C, r) = \frac{ \sum_{1/2 < |c_j| < r} (1 - |c_j|)}{\log \frac{1}{1-r}}
\qquad \text{and} \qquad
m_{a \to 0}(z) = \frac{z-a}{1-\overline{a}z}.
$$
From the definition, it is clear that the density $D^+(C)$ is conformally invariant. Since the density $D^+$ is stable under hyperbolic perturbations, from Condition (\ref{item:critical-points}), we can read off the following result:

\begin{corollary}
Suppose $F_C$ is a BC mapping with critical set $C = \{c_j\}$. There is a $\delta > 0$ depending on $F_C$ such that for any discrete set of points  $C' = \{c'_j\}$ in the unit disk with $d_h(c_j, c'_j) < \delta$, the maximal Blaschke product $F_{C'}$ with critical set $C'$ exists and is BC.
\end{corollary}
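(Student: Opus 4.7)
The plan is to apply Condition~(\ref{item:critical-points}) of Theorem~\ref{mainth} in both directions. The hypothesis that $F_C$ is BC gives, via that condition, that $C$ is quasi-separated and $D^+(C) < 1$. The goal is to choose $\delta$ small enough (depending only on $F_C$, so on these two parameters) that both properties persist for every set $C' = \{c'_j\}$ obtained by moving each $c_j$ by hyperbolic distance less than $\delta$; existence of $F_{C'}$ and its BC property will then follow from the same characterization.

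First I would check that quasi-separation is stable. A short covering argument suffices: if $\delta < 1$ and $c'_j \in B_h(z,1)$ then $c_j \in B_h(z, 1+\delta)$, and the latter ball can be covered by a bounded number $K = K(\delta)$ of unit hyperbolic balls, each containing at most $N$ elements of $C$. Hence $C'$ is $NK$-quasi-separated. Next I would verify $D^+(C') < 1$. Since each Möbius map $m_{a\to 0}$ is a hyperbolic isometry, one has $d_h(m_{a\to 0}(c_j), m_{a\to 0}(c'_j)) < \delta$, and therefore $1 - |m_{a\to 0}(c'_j)| = (1 + O(\delta))(1 - |m_{a\to 0}(c_j)|)$ uniformly in $a$ and $j$. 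Feeding this into the ratio defining $D(m_{a\to 0}(C'), r)$, passing to the limsup in $r$ and the supremum in $a$, yields $D^+(C') \le (1 + O(\delta))\, D^+(C)$, which is strictly less than $1$ once $\delta$ is sufficiently small.

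Once quasi-separation and $D^+(C') < 1$ are in hand, the maximal Blaschke product $F_{C'}$ with critical set $C'$ exists by the general theory of maximal Blaschke products developed in \cite{kraus, KR-survey, KR-maximal}, and the reverse implication of Condition~(\ref{item:critical-points}) of Theorem~\ref{mainth} then delivers that $F_{C'}$ is BC. The subtlest point will be the uniformity of the density estimate, since the limsup over $r$ and supremum over basepoints $a$ require simultaneous control of both the multiplicative perturbation of each $1 - |c_j|$ and the boundary effect of points slipping in or out of the annulus $\{1/2 < |z| < r\}$ under the perturbation; both contributions are, however, soft once the hyperbolic isometry property of $m_{a\to 0}$ has been exploited, and the boundary terms are dwarfed by the denominator $\log(1/(1-r))$ as $r \to 1$.
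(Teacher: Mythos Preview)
Your proposal is correct and follows exactly the route the paper intends: the paper derives the corollary in one sentence from Condition~(\ref{item:critical-points}), noting only that ``the density $D^+$ is stable under hyperbolic perturbations,'' and your argument simply fleshes out that stability claim (for both quasi-separation and $D^+$) together with the existence of $F_{C'}$, which the paper later justifies via Remark~9.
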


\item {\bf Almost isometries.} In \cite{GP91}, J.~Garnett and M.~Papadimitriakis investigated {\em almost isometries}\/, defined as analytic self-mappings $F$ of the unit disk for which there exists a constant $c > 0$ such that
$$
\diam_h(F(B_h(z,R))) \ge 2R - c,
$$
for all $z \in \mathbb{D}$ and $R > 0$. They showed that $F$ is an almost isometry if and only if there exists a constant $C_1 > 0$ such that every point $z \in \mathbb{D}$ lies within hyperbolic distance $C_1$ of a two-sided geodesic $\gamma = \gamma(z)$ such that
$$
d_h(F(w_1), F(w_2)) \ge d_h(w_1, w_2) - C_1, \qquad w_1, w_2 \in \gamma.
$$
This is just the two-sided version of the condition (\ref{eq:quasi-geodesic}) with $s=1$.

\item {\bf APHA mappings.} The hyperbolic area of a measurable set $E \subset \mathbb{D}$ is 
$$
A_h (E) = \int_E \frac{4\, dA(z)}{(1-|z|^2)^2 } . 
$$
In \cite{APHA}, the authors investigated analytic mappings of the unit disk which {\em almost preserve hyperbolic area} (or APHA, for short) defined by the condition that there exists a constant $c > 0$ such that
\begin{equation*}
\label{eq:APHA-condition}
A_h \bigl (F(B_h(z,R)) \bigr ) \ge c \, A_h (B_h(z,R)), \qquad z \in \mathbb{D}, \qquad R \ge 1.
\end{equation*}
Evidently, any APHA mapping has bounded compression, but the converse is far from true. Since BC mappings are maximal Blaschke products, from Condition (\ref{item:critical-points}), it follows that a BC mapping is APHA if and only if its critical points form a Blaschke sequence that satisfies the Carleson condition
$$
\sum_{c_j \in Q:\, F'(c_j) = 0} (1-|c_j|) < c \cdot \ell(Q),
$$
for every Carleson square $Q$ in the unit disk.

\item {\bf H\"older behaviour.} From our analysis of Conditions (\ref{item:hyperbolic-descent}) and (\ref{item:quasigeodesic-condition}) in Section \ref{sec:geodesic-rays}, it follows that mappings with bounded compression exhibit H\"older behaviour at almost every point of the unit circle:

\begin{corollary}
\label{holder-behaviour}
Let $F$ be an analytic mapping of the unit disk with bounded compression. There is an $0 < s \le 1$ such that
$$
\limsup_{r \to 1} \frac{1 - |F(r\xi)|}{(1-r)^s} < \infty, \qquad m \text{ a.e.~} \xi \in \partial \mathbb{D}.
$$
\end{corollary}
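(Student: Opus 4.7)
The plan is to derive the H\"older bound as an immediate consequence of the quasigeodesic condition (\ref{item:quasigeodesic-condition}) at the endpoint of any quasi-geodesic ray, and then to extend to a.e.\ boundary point by exploiting Condition (\ref{item:hyperbolic-descent}).

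First I will extract the H\"older bound along any quasi-geodesic ray. Given $z \in \mathbb{D}$, parametrize the ray $\gamma(z) = \{w(t): t \ge 0\}$ provided by (\ref{item:quasigeodesic-condition}) by hyperbolic arclength with $w(0) = z$ and endpoint $\xi(z) \in \partial\mathbb{D}$. Inequality \eqref{eq:quasi-geodesic} with $w_1 = z$ and $w_2 = w(t)$, combined with the triangle inequality and $d_h(0, w) = \log \frac{1+|w|}{1-|w|}$, yields $1 - |F(w(t))| \le C' e^{-st}$ for some $C' = C'(z, F)$. Since $1 - |w(t)| \asymp e^{-t}$ along the ray, this reads $1 - |F(w(t))| \lesssim (1-|w(t)|)^s$. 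As geodesic rays sharing an endpoint are hyperbolically asymptotic and $F$ is hyperbolically contracting, the bound transfers (with adjusted constant) to the radial ray $[0, \xi(z))$, giving H\"older behaviour at $\xi(z)$ with exponent $s$.

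Next, I will use Condition (\ref{item:hyperbolic-descent}) to realize quasi-geodesic rays constructively. Fix $\varepsilon$ and set $s = \log(1/\varepsilon)/(N(\varepsilon)\log 2) \in (0,1]$. Iterating (\ref{item:hyperbolic-descent}) from any Carleson square $Q_0$ produces a nested sequence $Q_0 \supset Q_1 \supset \cdots$ with $\ell(Q_k) = 2^{-Nk}\ell(Q_0)$ and $1 - |F(z_{Q_k})| \le \varepsilon^k(1 - |F(z_{Q_0})|) \lesssim \ell(Q_k)^s$, whose centers form a quasi-geodesic ray ending at a boundary point in $\bigcap_k \overline{Q_k}$ where H\"older holds. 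Since (\ref{item:hyperbolic-descent}) may be applied at every Carleson square, this descent may be initiated anywhere in the Carleson tree.

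The main obstacle is showing that the descent endpoints cover $\partial\mathbb{D}$ up to a null set, since a naive reading of (\ref{item:hyperbolic-descent}) produces only one good sub-square per parent and hence only a measure-zero Cantor set of endpoints. I would attack this via a stopping-time / Calder\'on--Zygmund argument on the Carleson tree: for each dyadic Carleson square $Q$ containing a given $\xi$, monitor the ratio $(1 - |F(z_Q)|)/\ell(Q)^s$ and apply (\ref{item:hyperbolic-descent}) whenever it exceeds a fixed threshold, resetting it after $N$ levels of descent. A Borel--Cantelli-type estimate on the Carleson tree, combined with the M\"obius-invariance of BC (which makes the descent constants uniform across scales and locations), should then show that the set of $\xi$ for which the stopping procedure never terminates has zero Lebesgue measure.
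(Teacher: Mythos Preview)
Your first two paragraphs are fine and match what the paper itself notes in Section~\ref{sec:holder-condition}: the quasi\-geodesic condition yields the H\"older bound at the endpoint of each good ray, and the descent condition manufactures such rays. The paper, however, does \emph{not} attempt to show that these endpoints exhaust a.e.\ point of $\partial\mathbb{D}$. Instead it invokes an external law-of-the-iterated-logarithm result for the truncated square function $\mathcal{Q}(r\xi)$ (Theorem~1.10 of \cite{square-functions}), observes that for a BC mapping $\mathcal{Q}(r\xi)\asymp d_h(0,r)$ uniformly in $\xi$, and reads off $d_h(0,F(r\xi))\ge s\,d_h(0,r)$ for $r$ large, a.e.\ $\xi$. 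So the two routes diverge exactly at the point you flag as the ``main obstacle''.

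Your proposed resolution of that obstacle has a genuine gap. Condition~(\ref{item:hyperbolic-descent}) as stated hands you \emph{one} good sub-square out of $2^N$; for a fixed $\xi$ the descent step helps only on the event (of conditional measure $2^{-N}$) that $\xi$ lies in that sub-square. A stopping rule that ``applies descent'' when a threshold is exceeded does not move $\xi$ into the good sub-square, and no Borel--Cantelli estimate based solely on (\ref{item:hyperbolic-descent}) can give a nontrivial lower bound on the set of $\xi$ that eventually stop, because you have no quantitative control on the measure of the bad sub-squares. M\"obius invariance makes the constants uniform but does not supply the missing measure estimate.

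If you want to salvage a direct argument, go back to the Green-identity computation in the proof of (\ref{item:hyperbolic-derivative})~$\Rightarrow$~(\ref{item:hyperbolic-descent}): it actually shows that the \emph{average} of $\log\frac{1}{1-|F|}$ over all $2^N$ sub-squares increases by $\gtrsim N$, not merely that one sub-square is good. Combined with the Schwarz-lemma bound $|\log(1-|F(z_{Q'})|)-\log(1-|F(z_Q)|)|\lesssim N$, this makes $X_k(\xi)=-\log(1-|F(z_{Q_k(\xi)})|)$ a process with bounded increments and uniformly positive drift on the dyadic tree, and an Azuma-type inequality then gives $X_k\ge s\,k$ eventually for a.e.\ $\xi$. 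That is essentially the martingale substitute for the square-function LIL the paper quotes.
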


\item {\bf Porosity.} A closed set $E \subset \partial \mathbb{D}$ is called {\em porous} if there exists a constant $\delta >0$ such that every arc $I \subset \partial \mathbb{D}$ contains a sub-arc $J$ with $|J| \ge \delta |I|$ and $J \cap E = \varnothing$. From Condition (\ref{item:ac-measures}), it is immediate that:

\begin{corollary}
Any positive measure supported on a porous subset of the unit circle is the Aleksandrov-Clark measure of a BC mapping.
\end{corollary}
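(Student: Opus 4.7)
The plan is to realize $\mu$ as an Aleksandrov-Clark measure of an explicit analytic self-map of the disk and then verify Condition (\ref{item:ac-measures}) of Theorem \ref{mainth}. For any positive finite Borel measure $\mu$ on $\partial \mathbb{D}$ and any $\alpha \in \partial \mathbb{D}$, the Herglotz formula
$$
\frac{\alpha + F(z)}{\alpha - F(z)} = \int_{\partial \mathbb{D}} \frac{\xi + z}{\xi - z}\, d\mu(\xi), \qquad z \in \mathbb{D},
$$
defines an analytic function $F$: the right-hand side has non-negative real part (since $\mu \ge 0$), so $(\alpha + F)/(\alpha - F)$ does as well, and solving for $F$ gives $|F| \le 1$; the open mapping theorem then forces $F(\mathbb{D}) \subset \mathbb{D}$ provided $\mu \ne 0$. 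By construction $\mu = \mu_\alpha(F)$.

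For the verification, let $E = \supp \mu$ and let $\delta_0 > 0$ be its porosity constant. For \emph{any} arc $I \subset \partial \mathbb{D}$ (heavy or not), porosity supplies a sub-arc $J \subset I$ with $|J| \ge \delta_0 |I|$ and $J \cap E = \varnothing$; since $\mu$ is supported on $E$, this forces $\mu(J) = 0$. Hence $\mu(J)/|J| = 0 \le \varepsilon \cdot \mu(I)/|I|$ holds trivially for every $\varepsilon > 0$, so Condition (\ref{item:ac-measures}) is satisfied with $\delta(\varepsilon) = \delta_0$ independent of $\varepsilon$. The equivalence in Theorem \ref{mainth} then yields that $F$ has bounded compression.

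There is no substantive obstacle in this argument: porosity of $E$ delivers (\ref{item:ac-measures}) in a strengthened form, since the sub-arc $J$ carries zero $\mu$-mass rather than merely a small fraction, and even the restriction to heavy arcs plays no role. The one tacit point is that for $\mu$ to arise as an Aleksandrov-Clark measure at all, it must be a positive finite Borel measure -- an assumption implicit in the statement of the corollary.
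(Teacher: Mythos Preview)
Your proof is correct and follows exactly the approach the paper intends: the paper simply declares the corollary ``immediate'' from Condition~(\ref{item:ac-measures}), and you have spelled out that immediacy by realizing $\mu$ as $\mu_\alpha(F)$ via the Herglotz formula and observing that porosity supplies a sub-arc of zero $\mu$-mass, so~(\ref{item:ac-measures}) holds trivially with $\delta(\varepsilon)=\delta_0$.
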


Likewise Condition (\ref{item:zero-sets}) leads to the following result:

\begin{corollary}
Let $F$ be a Blaschke product with zeros $\{z_n\}$. Assume that $\sigma = \sum (1-|z_n|^2) \delta_{z_n}$ is a Carleson measure. Assume also that there exists a constant $\delta >0$ such that any Carleson square $Q$ contains a Carleson sub-square $Q' \subset Q$, with $\ell(Q') \ge \delta \, \ell(Q)$, such that $Q'$ has no zeros of $F$. Then, $F$ is a BC mapping.
\end{corollary}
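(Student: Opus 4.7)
The plan is to verify the three sub-conditions of Theorem~\ref{mainth}(\ref{item:zero-sets}) and then invoke the equivalence stated there. Sub-condition (a) is given: $F$ is assumed to be a Blaschke product. Sub-condition (b) is also given: $\sigma = \sum (1-|z_n|^2)\delta_{z_n}$ is assumed to be a Carleson measure. So the only thing to check is (c), the descent property for heavy Carleson squares.

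For (c), I would fix $\varepsilon > 0$ and simply take $\delta(\varepsilon) := \delta$, the constant provided by the hypothesis (notice that it does not even depend on $\varepsilon$). Given any heavy Carleson square $Q$, the hypothesis furnishes a Carleson sub-square $Q' \subset Q$ with $\ell(Q') \ge \delta \, \ell(Q)$ and $Q' \cap \{z_n\} = \varnothing$. Since $\sigma$ is supported on the zero set, this gives $\sigma(Q') = 0$, and therefore
$$
\frac{\sigma(Q')}{\ell(Q')} = 0 \le \varepsilon \cdot \frac{\sigma(Q)}{\ell(Q)}
$$
trivially. Thus (c) holds, and Theorem~\ref{mainth} then yields that $F$ has bounded compression.

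There is essentially no obstacle here: the hypothesis has been tailor-made to provide (c) in its strongest possible form (sub-squares with no zeros rather than just small relative $\sigma$-mass), so the content of the corollary is really just the observation that the zero-set criterion in Theorem~\ref{mainth} admits this clean geometric reformulation. The only mild point worth noting is that one does not need to restrict to \emph{heavy} squares in the hypothesis: the assumption is phrased for arbitrary Carleson squares, which is strictly more than required but harmless, since it applies in particular to the heavy ones.
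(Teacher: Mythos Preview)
Your proposal is correct and matches the paper's approach: the paper simply states that the corollary follows from Condition~(\ref{item:zero-sets}) of Theorem~\ref{mainth}, and your argument spells out precisely why --- parts (a) and (b) are hypotheses, and part (c) holds trivially with $\sigma(Q')=0$.
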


\item {\bf Equivalence of conformal metrics.} The proof of the equivalence (\ref{item:hyperbolic-diameter}) $\Leftrightarrow$ (\ref{item:critical-points}) relies on the following theorem, which may be of independent interest:

\begin{theorem}
\label{bounded-gap-theorem}
Suppose $I$ is an inner function and $J$ is an analytic self-map of the unit disk. If
$$
\frac{|I'(z)|}{1 - |I(z)|^2} \asymp \frac{|J'(z)|}{1 - |J(z)|^2}, \qquad z \in \mathbb{D},
$$
then there exists an automorphism of the unit disk $\tau$ such that $I = \tau \circ J$.
\end{theorem}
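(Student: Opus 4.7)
The plan is to upgrade the two-sided estimate $\lambda_I \asymp \lambda_J$ to the pointwise equality $\lambda_I \equiv \lambda_J$ on $\mathbb{D}$, and then produce $\tau$ by rigidity of local hyperbolic isometries.

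\emph{Preliminary reductions.} Since $\lambda_F$ vanishes at each $c \in \crit F$ to an order equal to the multiplicity of $F'$ at $c$, the hypothesis $\lambda_I \asymp \lambda_J$ already forces $\crit I = \crit J$ as divisors. Next I claim that $J$ is also inner. The classical characterization (via Julia--Carath\'eodory) asserts that a non-constant analytic self-map $F$ of $\mathbb{D}$ is inner if and only if $\lambda_F(z)/\lambda_{\mathbb{D}}(z)$ has non-tangential limit $1$ at almost every boundary point. Applied to $I$, this gives $\lambda_J/\lambda_{\mathbb{D}} \asymp \lambda_I/\lambda_{\mathbb{D}}$ bounded below by a positive constant non-tangentially a.e., which is incompatible with $|J^*(\xi)| < 1$ on a positive-measure boundary set (for then the ratio would decay to zero along such radii).

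\emph{Liouville PDE and maximum principle.} For any analytic self-map $F$ of $\mathbb{D}$, a direct computation yields the distributional identity
\begin{equation*}
\Delta \log \lambda_F(z) \;=\; \lambda_F(z)^2 \;+\; 2\pi \sum_{c \in \crit F} n_c\, \delta_c,
\end{equation*}
which expresses that $\lambda_F$ is a conformal metric of constant curvature $-1$ with conic singularities at the critical points. Writing $h = \log(\lambda_J/\lambda_I)$, the conic contributions cancel by the first reduction, and we obtain
\begin{equation*}
\Delta h \;=\; \lambda_I^2\bigl(e^{2h} - 1\bigr),
\end{equation*}
so $\Delta h$ has the same sign as $h$. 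Consequently $h^+ = \max(h, 0)$ and $h^- = \max(-h, 0)$ are bounded subharmonic functions on $\mathbb{D}$. Because both $I$ and $J$ are inner, $\lambda_J/\lambda_I = (\lambda_J/\lambda_{\mathbb{D}})/(\lambda_I/\lambda_{\mathbb{D}}) \to 1$ non-tangentially a.e., so the non-tangential boundary values of $h^\pm$ vanish almost everywhere. The harmonic majorant bound $u \le P[u^*]$ for bounded subharmonic $u$ then forces $h^\pm \equiv 0$, i.e., $\lambda_I \equiv \lambda_J$.

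\emph{Constructing $\tau$.} Fix a regular value $w_0 \in \mathbb{D}$ of $J$ and a preimage $z_0 \in J^{-1}(w_0)$, and define the germ $\tau(w) = I(J^{-1}(w))$ at $w_0$ via the local holomorphic inverse of $J$ near $z_0$. The chain rule combined with $\lambda_I = \lambda_J$ yields
\begin{equation*}
|\tau'(w)|\,(1-|w|^2) \;=\; 1 - |\tau(w)|^2,
\end{equation*}
so by the equality case of the Schwarz--Pick lemma, $\tau$ is the germ of a M\"obius transformation. Since a M\"obius map is determined by its germ, $\tau$ extends uniquely to a M\"obius transformation of the Riemann sphere; the holomorphic functions $I$ and $\tau \circ J$ on $\mathbb{D}$ agree near $z_0$, so by the identity theorem they coincide on all of $\mathbb{D}$. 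Surjectivity of the inner functions $I$ and $J$ onto $\mathbb{D}$ finally forces $\tau(\mathbb{D}) = I(\mathbb{D}) = \mathbb{D}$, so $\tau \in \aut(\mathbb{D})$. The main obstacle is the maximum-principle step: one must carefully justify the vanishing non-tangential boundary values of the subharmonic functions $h^\pm$ (which rests on the inner-function characterization via the hyperbolic derivative) before invoking the Poisson bound for bounded subharmonic functions.
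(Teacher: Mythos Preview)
Your architecture---upgrade $\lambda_I \asymp \lambda_J$ to $\lambda_I \equiv \lambda_J$ via a maximum principle for $h = \log(\lambda_J/\lambda_I)$, then invoke Liouville rigidity---matches the paper's. The rigidity step (your construction of $\tau$) is fine and is essentially an explicit proof of the Liouville uniqueness statement the paper quotes. The difficulty is entirely in the boundary step, and there your argument has a real gap.

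You invoke as ``classical (via Julia--Carath\'eodory)'' the characterization that $F$ is inner if and only if $D_hF(z)=\lambda_F/\lambda_{\mathbb D}\to 1$ non-tangentially a.e.; you then use the forward direction twice (to show $J$ is inner, and to obtain $h^\pm\to 0$ a.e.). But Julia--Carath\'eodory only gives $D_hF\to 1$ at a boundary point where the angular derivative $\liminf\frac{1-|F(z)|}{1-|z|}$ is \emph{finite}, and a general inner function need not have finite angular derivative almost everywhere (for many infinite Blaschke products the sum $\sum\frac{1-|z_n|^2}{|\xi-z_n|^2}$ diverges for a.e.\ $\xi$). At the remaining points $D_hF=\frac{(1-|z|^2)|F'|}{1-|F|^2}$ is a genuine $0/0$; the Cauchy-estimate argument you have in mind only yields $(1-|z|)|F'(z)|\to 0$, which says nothing once $1-|F|^2\to 0$ as well. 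So the assertion you label ``classical'' is, at best, a separate nontrivial theorem that you have not supplied; your own closing remark that this step ``must be carefully justified'' is on point.

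The paper circumvents the issue entirely. Rather than analyse boundary values of $h$, it introduces the regularized family
\[
h_\rho(z)=\Bigl(\log\tfrac{2\rho\,|I'(z)|}{1-\rho^2|I(z)|^2}-\log\tfrac{2|J'(z)|}{1-|J(z)|^2}\Bigr)^{+},\qquad 0<\rho<1,
\]
which is uniformly bounded (hypothesis plus Schwarz) and subharmonic (Kato's inequality). If some $h_\rho\not\equiv 0$, a soft sub-mean-value fact forces $\liminf_{r\to 1}|\{\theta:h_\rho(re^{i\theta})>0\}|>0$. The \emph{only} boundary input used is the elementary consequence of $|I^*|=1$ a.e.\ that $|\{\theta:|I(re^{i\theta})|>1-\varepsilon\}|\to 2\pi$. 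On the intersection of these two angular sets, for $\sigma$ close to $1$ one has $h_\sigma-h_\rho\ge\log\frac{1-\rho^2|I|^2}{1-\sigma^2|I|^2}+O(1)$, which can be made arbitrarily large, contradicting the uniform bound on $h_\sigma$. This one-parameter blow-up replaces your pointwise boundary claim by something immediate, and is the main idea you are missing.
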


The proof will be presented in Section \ref{sec:maximality-criterion}.

\item {\bf Zero sets of Bergman spaces and interpolating sequences.} For $1 \le p < \infty$ and $\alpha > -1$, the {\em weighted Bergman space} $A^p_\alpha$ consists of analytic functions on the unit disk
for which
$$
\int_{\mathbb{D}} |f(z)|^p (1-|z|)^\alpha \, dA(z) < \infty.
$$
In the beautiful work \cite{kraus}, D.~Kraus showed that $C$ is the critical set of some Blaschke product if and only if it is the zero set of an $A^2_1$ function. Unfortunately, zero sets of functions in weighted Bergman spaces do not have a simple characterization. Nevertheless, as shown in \cite[Theorem 4.37]{HKZ}, zero sets of functions in
$$
A^{p-}_\alpha = \bigcap_{q<p} A^q_\alpha
$$
are described by the condition $D^+_\kappa(A) \le (1+\alpha)/p$. The density $D^+_\kappa$ is defined in terms of Privalov stars (as the definition is cumbersome, it will not be reproduced here). As explained in Seip's paper \cite{seip}, for separated sets, the density $D^+_\kappa$ is equivalent to the uniform upper density $D^+$. A.~Hartmann \cite{hartmann} observed that the same is true for quasi-separated sets. Consequently, any quasi-separated set with $D^+(A) < 1$ is the zero set of $A^2_1$ function, while any quasi-separated zero set necessarily satisfies $D^+(A) \le 1$.

A discrete set of points $C = \{c_j \}$ in the unit disk is called an {\em interpolating set} for $A^2_1$ if for every sequence $\{a_j \}$ such that $\{(1-|c_j|)^{-3/2} a_j \} \in \ell^2$, there exists a function $f \in A^2_1$ satisfying $f(c_j) = a_j$ for all $j$. Seip \cite{seip} showed that $C$ is an $A^2_1$ interpolating set if and only if it is separated and $D^+(C) < 1$. Shortly thereafter, Schuster and Seip \cite{SS98} studied a seemingly weaker notion of interpolation (whose definition involves Bergman canonical divisors) and showed that weakly interpolating sets were characterized by the same condition. Our proof of Characterization (\ref{item:critical-points}) can be viewed as an analogue of \cite{SS98} in the context of analytic self-maps of the unit disk. Somewhat surprisingly, the use of the Schwarz lemma simplifies the argument considerably in our setting.
\end{enumerate}

\section{Preliminary observations}

The hyperbolic derivative of an analytic self-map $F$ of the unit disk is given by
$$
D_h F(z) \, = \, \frac{\lambda_F}{\lambda_{\mathbb{D}}}(z) \, = \,
\frac{(1-|z|^2) \, |F'(z)|}{1-|F(z)|^2}, \quad z \in \mathbb{D}.
$$
The Schwarz lemma says that $D_h F(z) \le 1$ for any $z \in \mathbb{D}$ and equality holds at a single point if and only if $F$ is an automorphism of the unit disk.

In the introduction, we defined mappings of bounded compression using the hyperbolic diameter. As the following simple theorem shows, one could just as well have used the hyperbolic area, the hyperbolic area counted with multiplicity or asked that the image of every hyperbolic ball of radius 1 contain a hyperbolic ball of some fixed radius:

\begin{theorem}
\label{mainth2}
Let $F$ be an analytic self-map of the unit disk. The following conditions are equivalent:

\begin{enumerate}[label={\normalfont(1\alph*)}, ref=1\alph*, start=1]
\item \label{item:hyperbolic-diameter2}
Hyperbolic diameter: There exists a constant $c_1 > 0$ such that for any $z \in \mathbb{D}$,
\begin{equation*}
\label{eq:diameter-condition2}
\diam_h \bigl (F(B_h(z,1)) \bigr ) \ge c_1.
\end{equation*}

\item \label{item:hyperbolic-area}
Hyperbolic area: There exists a constant $c_2 > 0$ such that for any $z \in \mathbb{D}$,
\begin{equation*}
\label{eq:area-condition-without-multiplicity}
A_h \bigl (F(B_h(z,1)) \bigr ) \ge c_2.
\end{equation*}

\item \label{item:hyperbolic-area2} Hyperbolic area with multiplicity: There exists a constant $c_3 > 0$ such that for any $z \in \mathbb{D}$,
\begin{equation*}
\label{eq:area-condition-with-multiplicity}
\int_{B_{h}(z,1)} \frac{4 |F'(w)|^2}{(1-|F(w)|^2)^2} \, dA(w) \ge c_3.
\end{equation*}

\item \label{item:hyperbolic-derivative} Hyperbolic derivative: There exists a constant $c_4 > 0$ such that for any $z \in \mathbb{D}$,
\begin{equation*}
\label{eq:derivative-condition}
\max_{w \in B_h(z,1)} |D_h F(w)| > c_4.
\end{equation*}

\item \label{item:images-balls} Images of hyperbolic balls: There exists a constant $0 < c_5 < 1$ such that for any $z \in \mathbb{D}$,
\begin{equation*}
\label{eq:image-condition}
F(B_h(z,1)) \supset B_h(F(z), c_5).
\end{equation*}

\item \label{item:distortion-crit}
Distortion away from the critical set: The critical set of $F$ is quasi-separated and for every $\varepsilon > 0$, there exists a $\delta > 0$ such that if $d_h(z, \crit F) > \varepsilon$, then the hyperbolic derivative $D_h F(z) > \delta.$
\end{enumerate}
\end{theorem}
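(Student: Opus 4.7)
I plan to establish the cycle $(\ref{item:images-balls}) \Rightarrow (\ref{item:hyperbolic-diameter2}), (\ref{item:hyperbolic-area}), (\ref{item:hyperbolic-area2}) \Rightarrow (\ref{item:hyperbolic-derivative}) \Rightarrow (\ref{item:images-balls})$ together with $(\ref{item:hyperbolic-derivative}) \Leftrightarrow (\ref{item:distortion-crit})$. The easy directions rely on direct estimates involving $M(z) = \sup_{w \in B_h(z,1)} D_h F(w) \le 1$. Since $F$ contracts hyperbolic arc length pointwise by the factor $D_h F$, any rectifiable path in $B_h(z,1)$ of hyperbolic length $\ell$ maps to a path of length at most $M(z) \ell$, giving $\diam_h F(B_h(z,1)) \le 2 M(z)$. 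The integral in $(\ref{item:hyperbolic-area2})$ reads $\int (D_h F)^2 \, dA_h \le M(z)^2 A_h(B_h(0,1))$ and dominates the plain hyperbolic area of $F(B_h(z,1))$. Finally, $(\ref{item:images-balls})$ trivially forces both diameter $\ge 2c_5$ and area $\ge A_h(B_h(0,c_5))$. These observations give both $(\ref{item:images-balls}) \Rightarrow (\ref{item:hyperbolic-diameter2}), (\ref{item:hyperbolic-area}), (\ref{item:hyperbolic-area2})$ and each of them $\Rightarrow (\ref{item:hyperbolic-derivative})$.

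\textbf{The step $(\ref{item:hyperbolic-derivative}) \Rightarrow (\ref{item:images-balls})$.} I argue by a normal-family contradiction. Suppose $(\ref{item:images-balls})$ fails: there is a sequence $z_n$ with $F(B_h(z_n, 1)) \not\supset B_h(F(z_n), 1/n)$. Pre- and post-compose $F$ with M\"obius automorphisms to obtain $\tilde F_n : \mathbb{D} \to \mathbb{D}$ with $\tilde F_n(0) = 0$, inheriting $\sup_{B_h(0,1)} D_h \tilde F_n \ge c_4$ from $(\ref{item:hyperbolic-derivative})$. By Montel's theorem pass to a subsequential limit $\tilde F_\infty$; the derivative supremum bound persists, so $\tilde F_\infty$ is nonconstant, and the open mapping theorem provides a fixed hyperbolic ball $B_h(0, c_5) \subset \tilde F_\infty(B_h(0, 1))$. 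A Hurwitz/Rouch\'e argument then transfers this inclusion (with a slightly smaller radius) to $\tilde F_n$ for large $n$, contradicting the supposed failure.

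\textbf{The equivalence $(\ref{item:hyperbolic-derivative}) \Leftrightarrow (\ref{item:distortion-crit})$.} For $(\ref{item:distortion-crit}) \Rightarrow (\ref{item:hyperbolic-derivative})$: quasi-separation furnishes some point of $B_h(z, 1)$ at hyperbolic distance $\ge \varepsilon_0 > 0$ (universal) from $\crit F$, where the distortion bound yields $D_h F \ge \delta(\varepsilon_0)$. Conversely, normalising by M\"obius transformations so that $\tilde F(0) = 0$ and $|\tilde F'(0)| \ge c_4$, Cauchy's estimate $|\tilde F'(z)| \le 2/(1-|z|)$ combined with Jensen's formula applied to $\tilde F'$ on a disk of radius slightly exceeding $\tanh(1/2)$ provides a universal upper bound on the number of critical points of $F$ in $B_h(z, 1)$, giving quasi-separation. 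For the pointwise distortion bound, quasi-separation reduces to analysing finitely many critical points $c_1, \ldots, c_k$ in a ball $B_h(z, R)$; the function $u(w) = \log D_h F(w) - \log|\prod_j b_{c_j}^{m_j}(w)|$ has cancelled log singularities at the $c_j$ and satisfies $\Delta u = -4(1-(D_h F)^2)/(1-|w|^2)^2 \le 0$ on $B_h(z, R)$, so is superharmonic; the minimum principle combined with the Schwarz-Pick bound $|b_{c_j}(z)| \ge \tanh(\varepsilon/2)$ then yields $D_h F(z) \ge \delta(\varepsilon, c_4)$.

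\textbf{Main obstacle.} The hardest step is the pointwise distortion bound in the forward direction of $(\ref{item:hyperbolic-derivative}) \Leftrightarrow (\ref{item:distortion-crit})$. The superharmonic comparison function $u$ above has boundary values on $\partial B_h(z, R)$ that are not a priori bounded below, since $D_h F$ may be small there and the local Blaschke factors $b_{c_j}$ can be close to zero on parts of the boundary. Resolving this requires a careful choice of $R$ together with the propagation of $(\ref{item:hyperbolic-derivative})$ to boundary points, exploiting the fact that each such point lies within hyperbolic distance 1 of a point where $D_h F \ge c_4$, combined with Schwarz-Pick controls on the $b_{c_j}$ away from critical points.
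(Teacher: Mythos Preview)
The paper omits the proof entirely, saying only that ``the equivalence of the above conditions comes from simple compactness arguments.'' Your cycle $(\ref{item:images-balls}) \Rightarrow (\ref{item:hyperbolic-diameter2}),(\ref{item:hyperbolic-area}),(\ref{item:hyperbolic-area2}) \Rightarrow (\ref{item:hyperbolic-derivative}) \Rightarrow (\ref{item:images-balls})$ is correct and is exactly the sort of argument the authors have in mind; the normal-families/Hurwitz step for $(\ref{item:hyperbolic-derivative}) \Rightarrow (\ref{item:images-balls})$ is the prototype ``compactness argument.'' One small correction: in the Jensen step for quasi-separation you must center the normalization at the point $w\in B_h(z,1)$ where $D_hF(w)>c_4$, not at $z$ itself; consequently the critical points of $F$ in $B_h(z,1)$ land in $B_h(0,2)$ after normalization, so Jensen must be applied on a Euclidean disk of radius exceeding $\tanh(1)$, not $\tanh(1/2)$. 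This is cosmetic.

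The genuine gap is the one you flag yourself: the superharmonic comparison for the pointwise distortion bound in $(\ref{item:hyperbolic-derivative}) \Rightarrow (\ref{item:distortion-crit})$. Your function $u=\log D_hF-\sum m_j\log|b_{c_j}|$ is indeed superharmonic on $B_h(z,R)$, but the minimum principle only gives $u(z)\ge \min_{\partial B_h(z,R)} u$, and on that boundary you have $u(w)\ge \log D_hF(w)$ (since $|b_{c_j}|\le 1$), which can be arbitrarily negative: nothing in $(\ref{item:hyperbolic-derivative})$ prevents $D_hF$ from being tiny on the \emph{entire} boundary circle. Your proposed rescue---propagating the bound from a nearby point $w'$ with $D_hF(w')\ge c_4$---does not help, because $w'$ need not lie on $\partial B_h(z,R)$ and you have no mechanism to transfer the bound along the short segment from $w'$ to $w$ without already knowing a version of the distortion estimate you are trying to prove.

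The clean fix is to abandon the superharmonic route and reuse your own compactness machinery. Suppose the distortion bound fails: pick $z_n$ with $d_h(z_n,\crit F)>\varepsilon$ and $D_hF(z_n)\to 0$. Normalize by M\"obius maps so $\tilde F_n(0)=0$; then $D_h\tilde F_n(0)\to 0$ while $(\ref{item:hyperbolic-derivative})$ still supplies $w_n\in B_h(0,1)$ with $D_h\tilde F_n(w_n)>c_4$. A subsequential limit $\tilde F_\infty$ is therefore nonconstant, yet $\tilde F_\infty'(0)=0$. Since $\tilde F_n'\to\tilde F_\infty'$ locally uniformly and $\tilde F_\infty'\not\equiv 0$, Hurwitz forces $\tilde F_n'$ to vanish inside $B_h(0,\varepsilon)$ for large $n$, contradicting $d_h(0,\crit\tilde F_n)>\varepsilon$. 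This is precisely the ``simple compactness argument'' the paper alludes to, and it mirrors your own proof of $(\ref{item:hyperbolic-derivative}) \Rightarrow (\ref{item:images-balls})$.
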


The equivalence of the above conditions comes from simple compactness arguments. We omit the proof. Note that in each characterization, the choice of hyperbolic radius 1 is not essential -- any fixed hyperbolic radius could be used.

\begin{lemma}
\label{island}
Let $F$ be an analytic self-map of the unit disk with bounded compression. Then $F$ is an inner function. In fact, $F$ is an indestructible Blaschke product, that is, $\tau \circ F$ is a Blaschke product for any M\"obius transformation $\tau \in \aut(\mathbb{D})$.
\end{lemma}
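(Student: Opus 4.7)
The plan is to prove the lemma in three steps: (i) $F$ is inner, (ii) $F$ is actually a Blaschke product, and (iii) indestructibility follows from the M\"obius invariance of bounded compression. For step (i), argue by contradiction: suppose the Fatou boundary values satisfy $|F^*| < 1$ on a set $E \subset \partial \mathbb{D}$ of positive Lebesgue measure. Fix a point $\xi_0 \in E$ at which the nontangential limit of $F$ also exists (this holds a.e.\ in $E$). Let $\phi_r \in \aut(\mathbb{D})$ be the M\"obius transformation sending $0$ to $r\xi_0$ and set $F_r := F \circ \phi_r$. A direct computation shows that for each $w \in \mathbb{D}$, $\phi_r(w) \to \xi_0$ as $r \to 1$ while remaining in a fixed Stolz angle at $\xi_0$ (the aperture depending on $w$), so $F_r(w) \to F^*(\xi_0)$. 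Since $\{F_r\}_r$ is a normal family, $F_r \to F^*(\xi_0)$ locally uniformly on $\mathbb{D}$. Consequently $\diam_h(F_r(B_h(0,1))) \to 0$, and since $\phi_r$ is a hyperbolic isometry this equals $\diam_h(F(B_h(r\xi_0, 1)))$, contradicting bounded compression.

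For step (ii), $F$ is now known to be inner, so by Jensen's formula its zeros form a Blaschke sequence. Factor $F = B \cdot S$ where $B$ is the Blaschke product over this zero sequence and $S = S_{\mu_s}$ is the singular inner factor with positive singular measure $\mu_s$. Assume for contradiction that $\mu_s \ne 0$; then at $\mu_s$-a.e.\ $\xi_0 \in \partial \mathbb{D}$, the Poisson integral $P\mu_s$ has radial limit $+\infty$ at $\xi_0$. By Harnack's inequality on $B_h(r\xi_0, 1)$, $P\mu_s(w) \asymp P\mu_s(r\xi_0) \to +\infty$ uniformly on this ball, so $|S(w)| = e^{-P\mu_s(w)} \to 0$ uniformly. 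Differentiating $\log S$ gives
\[
\frac{S'(z)}{S(z)} = -\int_{\partial \mathbb{D}} \frac{2\xi}{(\xi - z)^2}\, d\mu_s(\xi),
\]
whence $(1-|w|^2)|S'(w)| \le 2|S(w)|\, P\mu_s(w) = 2 P\mu_s(w)\, e^{-P\mu_s(w)} \to 0$ uniformly on the ball. Combined with the Schwarz--Pick bound $(1-|w|^2)|B'(w)| \le 1 - |B(w)|^2 \le 1$, this yields
\[
(1-|w|^2)|F'(w)| \le (1-|w|^2)|B'(w)||S(w)| + |B(w)|(1-|w|^2)|S'(w)| \to 0
\]
uniformly on $B_h(r\xi_0, 1)$. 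Since $1 - |F(w)|^2 \ge 1 - |S(w)|^2 \to 1$, the hyperbolic derivative $D_h F \to 0$ uniformly on this ball, contradicting characterization~(\ref{item:hyperbolic-derivative}) of Theorem~\ref{mainth2}. Hence $\mu_s = 0$ and $F$ is Blaschke.

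For step (iii), observe that any $\tau \in \aut(\mathbb{D})$ is a hyperbolic isometry, so $\diam_h((\tau \circ F)(B_h(z,1))) = \diam_h(F(B_h(z,1))) \ge c_1$; that is, $\tau \circ F$ inherits bounded compression. Applying steps (i) and (ii) to $\tau \circ F$ gives that $\tau \circ F$ is a Blaschke product for every $\tau \in \aut(\mathbb{D})$, which is indestructibility. The main obstacle is step (ii): one must upgrade the failure of the singular factor from radial blow-up of $P\mu_s$ at one point to the \emph{uniform} collapse of $D_h F$ over an entire hyperbolic ball of radius 1. The decisive tool is the pointwise bound $(1-|w|^2)|S'(w)| \lesssim P\mu_s(w)\, e^{-P\mu_s(w)}$ combined with Harnack's inequality, which converts the radial divergence of $P\mu_s$ into uniform divergence, and hence uniform vanishing of both $|S|$ and $(1-|w|^2)|S'|$, on each such ball.
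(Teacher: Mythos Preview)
Your proof is correct. The paper proceeds differently and much more tersely: it invokes \cite[Lemma~1.8]{mashreghi}, which reduces indestructibility of $F$ to showing that $F$ has no nontangential limit lying in $\mathbb{D}$ at \emph{any} point of $\partial\mathbb{D}$, and this is dispatched in one line by exactly your step~(i) observation (the hyperbolic diameter of $F(B_h(r\xi,1))$ shrinks to zero at any such $\xi$). Your route is more self-contained: after establishing innerness you eliminate the singular factor directly via Harnack's inequality and the pointwise bound $(1-|w|^2)|S'(w)| \le 2\,P\mu_s(w)\,e^{-P\mu_s(w)}$, and then recover indestructibility from the M\"obius invariance of bounded compression. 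The paper's argument is shorter but outsources the structural characterization of indestructible Blaschke products to an external reference; yours avoids that citation at the cost of the explicit singular-factor analysis, and in doing so exhibits a concrete mechanism (uniform divergence of $P\mu_s$ on hyperbolic $1$-balls forcing $D_hF\to 0$) that is reusable in its own right.
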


\begin{proof}
By \cite[Lemma 1.8]{mashreghi}, it is enough to show that if $F$ has a non-tangential limit at a point $\xi \in \partial \mathbb{D}$, then it cannot lie in the unit disk.
Indeed, if the non-tangential limit at $\xi \in \partial \mathbb{D}$ is $a \in \mathbb{D}$, then
the hyperbolic diameters of $F(B_h(r\xi, 1))$ tend to 0 as $r \to 1$, contradicting the assumption on $F$.
\end{proof}

\begin{lemma}
Let $F$ be an analytic self-map of the unit disk with bounded compression. Then there exists an $0 < a < 1$ such that for any $z_0 \in \mathbb{D}$ and $R \ge 1$, the image of $B(z_0, R)$ contains $B_h(F(z_0), a R)$.
\end{lemma}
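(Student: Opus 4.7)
The plan is to bootstrap condition (\ref{item:images-balls}) of Theorem \ref{mainth2} into a scale-invariant statement by iterating the local surjectivity of $F$ along a hyperbolic geodesic. Bounded compression provides a constant $c_5 > 0$ such that $F(B_h(z,1)) \supset B_h(F(z), c_5)$ for every $z \in \mathbb{D}$, and I would show that the lemma holds with $a = c_5/4$ (any $a < c_5/2$ works).

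Fix $z_0 \in \mathbb{D}$, $R \ge 1$, and a target point $w \in B_h(F(z_0), aR)$. Let $d = d_h(F(z_0), w) < c_5 R/4$ and parameterize the hyperbolic geodesic from $F(z_0)$ to $w$ by arc length. I would split this geodesic into $N := \max(\lfloor R \rfloor, 1)$ equal subarcs with endpoints $w_0 = F(z_0), w_1, \ldots, w_N = w$, so that $d_h(w_{k-1}, w_k) = d/N$. Since $R \ge 1$ forces $N \ge R/2$, each step has length $d/N < (c_5 R/4)/(R/2) = c_5/2 < c_5$.

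Next I would lift the sequence $\{w_k\}$ inductively to a sequence $\{z_k\}$ with $F(z_k) = w_k$. Given $z_{k-1}$, the bound $d_h(w_{k-1}, w_k) < c_5$ combined with condition (\ref{item:images-balls}) yields
$$
w_k \in B_h(F(z_{k-1}), c_5) \subset F(B_h(z_{k-1}, 1)),
$$
so some $z_k \in B_h(z_{k-1}, 1)$ satisfies $F(z_k) = w_k$. The triangle inequality then gives $d_h(z_0, z_N) < N \le R$, so $z_N \in B_h(z_0, R)$ and $F(z_N) = w$, as desired.

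The only delicate point is the bookkeeping: one must coordinate the subdivision so that each hop has length strictly less than $c_5$ (so that condition (\ref{item:images-balls}) applies) while the total number of hops remains $\le R$. Choosing $a < c_5/2$ and $N$ of order $R$ as above achieves both simultaneously for every $R \ge 1$, and no obstacle of substance arises beyond this.
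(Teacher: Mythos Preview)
Your proof is correct and follows essentially the same approach as the paper: both iterate condition (\ref{item:images-balls}) along a subdivision of the geodesic $[F(z_0),w]$ into steps of length $<c_5$ and lift inductively, differing only in bookkeeping (the paper first treats integer $R$ with $a=c_5$ and then passes to general $R\ge 1$ with $a=c_5/2$, whereas you handle all $R\ge 1$ at once with $N=\lfloor R\rfloor$ and $a=c_5/4$).
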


\begin{proof}
It suffices to treat the case when $R$ is an integer. We will show that in this case, the lemma holds with $a = c_5$. (For general $R \ge 1$, one can take $a = c_5/2$). Let $w$ be a point in $B_h(F(z_0), c_5 R)$. We partition the geodesic segment $[F(z_0), w]$ into $R$ pieces of equal length by marking the points $w_0 = F(z_0), \dots, w_R = w$. Using the images of hyperbolic balls criterion, one can inductively choose the points $z_1, z_2, \dots, z_R$ with $F(z_{i+1}) = w_{i+1}$ and $d_h(z_{i+1}, z_i) < 1$ for $i=0, \dots, R-1$. By the triangle inequality, $z = z_R$ is a point with $F(z) = w$ whose hyperbolic distance to $z_0$ is less than $R$.
\end{proof}

\section{Geodesic rays and hyperbolic descent}
\label{sec:geodesic-rays}

In this section, we show the equivalence of Conditions (\ref{item:hyperbolic-derivative}), (\ref{item:hyperbolic-descent}) and (\ref{item:quasigeodesic-condition}).

\begin{proof}
(\ref{item:hyperbolic-derivative}) $\Rightarrow$ (\ref{item:hyperbolic-descent}). Fix a Carleson square 
$$
Q \, = \, Q(I) \, = \, \bigl \{r \xi \, : \, \xi \in I, \, 0 < 1-r < \ell(Q) \bigr \}.
$$
Given $\varepsilon > 0$, let $N = N(\varepsilon) > 0$ be a large integer to be determined later. Applying Green's identity to the functions $\log(1-|F(z)|^2)$ and $\log(1/|z|)$ in the domain
$$
Q_N = \bigl \{ r \xi \in Q \, : \,  2^{-N} \ell (Q) < 1 - r < \ell(Q) \bigr \},
$$
we get
\begin{equation}
\label{eq:green}
- \int_{\partial Q_N \cap \{ 1 -r = \ell(Q) \} } \log (1-|F|^2) \, \frac{ds}{|z|} + \int_{\partial Q_N \cap \{ 1 -r = 2^{-N} \ell(Q) \} } \log (1-|F|^2) \, \frac{ds}{|z|}
\end{equation}
\begin{equation*}
= - \int_{\partial Q_N} \partial_{\textbf{n}} \log(1-|F(z)|^2) \log \frac{1}{|z|} \, ds + \int_{Q_N} \Delta \log(1-|F(z)|^2) \log \frac{1}{|z|} \, dA(z).
\end{equation*}
As $\bigl (1-|z|^2 )|\nabla \log(1-|F(z)|^2) \bigr | \lesssim 1$,
$$
\int_{\partial Q_N} \partial_{\mathbf{n}} \log(1-|F(z)|^2) \log \frac{1}{|z|} \, ds = O(\ell(Q)).
$$
Furthermore, since
$$
\Delta \log(1-|F|^2) = - 4 \, \frac{|F'|^2}{(1-|F|^2)^2},
$$
the area integral
$$
\int_{Q_N} \Delta \log (1-|F(z)|^2) \log \frac{1}{|z|} dA(z) \, \asymp \,  - 4 \int_{Q_N} D_h^2 F(z) \, \frac{dA(z)}{1-|z|} \, \lesssim \, -N \ell(Q),
$$
by Condition (\ref{item:hyperbolic-derivative}). Hence, if $N$ is sufficiently large, then (\ref{eq:green}) reduces to
$$
- \int_{\partial Q_N \cap \{ 1 - r = \ell(Q) \} } \log (1-|F|^2) \, d\theta + \int_{\partial Q_N \cap \{ 1 - r = 2^{-N} \ell(Q) \} } \log (1-|F|^2) \, d\theta \, \lesssim \, -N \ell(Q).
$$
Consequently, there exists a Carleson square $Q' \subset Q$ with $\ell(Q') = 2^{-N} \ell(Q)$ such that
$$
- \log (1 - |F(z_Q)|^2) + \log(1 - |F(z_{Q'})|) \lesssim -N,
$$
that is, there exists an absolute constant $c>0$ such that $1-|F(z_{Q'})| \le 2^{-cN} (1-|F(z_Q)|)$.

\medskip

(\ref{item:hyperbolic-descent}) $\Rightarrow$ (\ref{item:quasigeodesic-condition}). Fix a point $z \in \mathbb{D}$ and pick a Carleson square $Q$ containing $z$ with $1-|z| \ge \ell(Q)/2$. Denote $Q_0 = Q$  and apply $(\ref{item:hyperbolic-descent})$ with $\varepsilon = 1/2$ to get a Carleson square $Q_1 \subset Q_0$, $\ell(Q_1) = 2^{-N} \ell(Q_0)$ with $1 - |F(z_{Q_1})| \le 2^{-1} \bigl (1-|F(z_{Q_0})| \bigr )$. By induction, we obtain a nested collection $\{Q_n\}$ of Carleson squares with $\ell(Q_n) = 2^{-N} \ell(Q_{n-1})$ and $1 - |F(z_{Q_n})| \le 2^{-1} \bigl (1-|F(z_{Q_{n-1}})| \bigr )$ for $n = 1, 2, \dots$.

Consider the point $\xi \in \bigcap_n \overline{Q_n} \in \partial \mathbb{D}$. It is not difficult to see that the geodesic ray $\gamma(z) = [z, \xi)$ satisfies the hypotheses of (\ref{item:quasigeodesic-condition}), since it lies within a bounded distance of the union of the geodesic segments joining the centers of the Carleson squares $Q_n$ for $n = 1, 2, \dots$. 

\medskip

(\ref{item:quasigeodesic-condition}) $\Rightarrow$ (\ref{item:hyperbolic-derivative}). Let $z \in \mathbb{D}$ be a point in the unit disk and $\gamma(z)$ be the geodesic ray given by (\ref{item:quasigeodesic-condition}). By the definition of $\gamma$, the hyperbolic diameter
$$
\diam_h \bigl (F(B_h(z, 1 + C/s) \bigr ) \ge s,
$$
for any $z \in \mathbb{D}$. A compactness argument shows that the hyperbolic diameter of $\diam_h(F(B_h(z, 1))$ is also bounded below by a positive constant, independent of $z \in \mathbb{D}$.
\end{proof}

\subsection{H\"older condition}
\label{sec:holder-condition}

The proof of (\ref{item:hyperbolic-descent}) $\Rightarrow$ (\ref{item:quasigeodesic-condition}) above shows that $\gamma$ is a ``good'' geodesic in the sense set that there exist an $0 < s \le 1$ and $C_2 > 0$ such that
$$
|d_h(0, F(w_1)) - d_h(0,F(w_2))| \ge s \cdot d_h (w_1, w_2) - C_2, \qquad w_1, w_2 \in \gamma.
$$
If $\xi \in \partial \mathbb{D}$ denotes the endpoint of $\gamma$, this last condition implies that
$$
\frac{1 - |F(r\xi)|}{(1-r)^s} \lesssim \frac{1 - |F(t\xi)|}{(1-t)^s}, \qquad 0 < t < r < 1.
$$
In particular,
$$
\limsup_{r \to 1} \frac{1-|F(r\xi)|}{(1-r)^s} < \infty.
$$
For $s = 1$, good geodesic rays are closely related to angular derivatives, see \cite[Section 3.1]{APHA}.

Fix a $\rho > 0$ and let $K \ge 0$ be a smooth even function supported on $(-\rho, \rho)$ with $\int_{\mathbb{R}} K(t)dt = 1$. In the proof below, we will use the {\em truncated square function}
$$
\mathcal Q(re^{i\theta}) = \int_{\{ |z| \le r, \, |z-e^{i\theta}| \le \rho(1-|z|)\} } K \biggl ( \frac{\arg z - \theta}{\rho} \biggr ) \cdot D^2_h F(z) \, \frac{dA(z)}{(1-|z|^2)^2},
$$
which is a smooth analogue of the {\em hyperbolic area function}
$$
\int_{\{ |z| \le r, \, |z-e^{i\theta}| \le \rho(1-|z|)\} } D^2_h F(z) \, \frac{dA(z)}{(1-|z|^2)^2}.
$$

\begin{proof}[Proof of Corollary \ref{holder-behaviour}]
According to \cite[Theorem 1.10]{square-functions}, for any inner function $F$, one has
\begin{equation}
\label{eq:square-functions}
\limsup_{r \to 1} \frac{|d_h(0, F(r\xi)) - \mathcal Q(r\xi)|}{\sqrt{\mathcal Q(r\xi) \log \log \mathcal Q(r\xi)}} \le C, \qquad m \text{ a.e.~}\xi \in \partial \mathbb{D}.
\end{equation}
From the definitions, it is clear that $\mathcal Q(r\xi) \lesssim d_h(0,r)$ for any analytic self-map of the unit disk and 
$\mathcal Q(r\xi) \asymp d_h(0,r)$ if $F$ has bounded compression. A casual inspection of (\ref{eq:square-functions}) shows that there exists an $0 < s \le 1$ such that
$$
s \, \le \, \frac{d_h(0, F(r\xi))}{d_h(0, r)} \, \le \, 1, \qquad r \ge r_0(\xi), \qquad m \text{ a.e.~}\xi \in \partial \mathbb{D},
$$
and the corollary is proved.
\end{proof}

\section{Aleksandrov-Clark measures}

In this section, we show (\ref{item:hyperbolic-derivative}) $\Leftrightarrow$ (\ref{item:ac-measures}), thereby characterizing BC mappings in terms of their Aleksandrov-Clark measures. The proof uses a number of identities from \cite{AAN} which express the hyperbolic derivative of an analytic self-map of the unit disk in terms of its Aleksandrov-Clark measures. These identities have played an important role in the works \cite{APHA, BN}.

\begin{proof} (\ref{item:hyperbolic-derivative}) $\Rightarrow$ (\ref{item:ac-measures}). Let $u$ be the Poisson extension of $\mu$. A simple calculation shows that the hyperbolic derivative 
\begin{equation}
\label{eq:identity2}
2 \, D_h F(z) = (1-|z|^2) \frac{|\nabla u(z)|}{u(z)}, \qquad z \in \mathbb{D}.
\end{equation}
Fix an arc $I \subset \partial \mathbb{D}$. Let $Q = Q(I)$ be the corresponding Carleson square and $N > 0$ be a large integer to be determined later. Applying Green's identity in the domain
$$
Q_N = \bigl \{ r \xi \in Q \, : \,  2^{-N} \ell (Q) < 1 - r < \ell(Q) \bigr \},
$$
to the functions $\log u(z)$ and $\log(1/|z|)$, we get
\begin{equation}
\label{eq:green2}
- \int_{\partial Q_N \cap \{ 1 - |z|  = \ell(Q) \} } \log  u(z) \, \frac{ds}{|z|} + \int_{\partial Q_N \cap \{ 1 - |z| = 2^{-N} \ell(Q) \} } \log u(z) \, \frac{ds}{|z|}
\end{equation}
\begin{equation*}
= - \int_{\partial Q_N} \partial_{\textbf{n}} \log u(z) \cdot \log \frac{1}{|z|} \, ds + \int_{Q_N} \Delta \log u(z) \cdot \log \frac{1}{|z|} \, dA(z).
\end{equation*}
By Harnack's inequality,
$$
\int_{\partial Q_N} \partial_{\textbf{n}} \log u(z) \cdot \log \frac{1}{|z|} \, ds = O(\ell(Q)).
$$
Using $\Delta (\log u) = - |\nabla u|^2/|u|^2$ together with (\ref{eq:identity2}) and Condition (\ref{item:hyperbolic-derivative}), we obtain
$$
\int_{Q_N} \Delta \log u(z) \cdot \log \frac{1}{|z|} \, dA(z) \, \asymp \, - \int_{Q_N} D_h^2 F(z) \, \frac{dA(z)}{1-|z|} \, \lesssim \, -N \ell(Q).
$$
Hence, (\ref{eq:green2}) reduces to
$$
- \int_{\partial Q_N \cap \{ 1 - |z| = \ell(Q) \} } \log u \, d\theta + \int_{\partial Q_N \cap \{ 1 - |z| = \ 2^{-N} \ell(Q) \} } \log u \, d\theta  \lesssim -N \ell(Q).
$$
Consequently, there exists a Carleson square $Q' \subset Q$, $\ell(Q') = 2^{-N} \ell(Q)$ such that $- \log u(z_Q) + \log u(z_{Q'}) \lesssim -N$, that is, there exists an absolute constant $c>0$ such that $u(z_{Q'}) \le 2^{-cN} u(z_Q)$. If $I$ is a heavy arc for $\mu$, then $\frac{\mu(I)}{|I|} \ge \frac{u(z_Q)}{100}$ and
$$
\frac{\mu(I')}{|I'|} \, \lesssim \, u(z_{Q'}) \le 2^{-cN} u(z_Q) \, \le \, 100 \cdot 2^{-cN} \cdot \frac{\mu(I)}{|I|},
$$
where $I' \subset \partial \mathbb{D}$ is the arc such that $Q' = Q(I')$.

\medskip

(\ref{item:ac-measures}) $\Rightarrow$ (\ref{item:hyperbolic-derivative}). Let $u$ be the Poisson integral of $\mu$. Since
$$
2\, D_h F(z) = (1-|z|^2) \frac{|\nabla u(z)|}{u(z)}, \qquad z \in \mathbb{D},
$$ 
in order to verify (\ref{item:hyperbolic-derivative}), it suffices to show that there exist positive constants $R, c > 0$ such that for any point $z \in \mathbb{D}$, there exists a point $w \in \mathbb{D}$ with
$$
d_h(z, w) < R \qquad \text{and} \qquad (1-|w|^2) \frac{|\nabla u(w)|}{u(w)} > c.
$$
For $z \in \mathbb{D} \setminus \{0\}$, let $I(z) \subset \partial \mathbb{D}$ be the arc centered at $z/|z|$ of length $2(1-|z|)$. Given an arc $I \subset \partial \mathbb{D}$, let $C \cdot I$ denote the arc with the same midpoint, but with $C$ times the length. The proof rests on the following observation:

\medskip

\noindent {\bf Claim.} Let $I = I(z) \subset \partial \mathbb{D}$ be an arc on the unit circle. If $2 I(z)$ is not heavy for $\mu$, then 
$$
2 \, D_h F(z) \, = \, (1-|z|^2) \frac{|\nabla u(z)|}{u(z)} \, \gtrsim \, 1. 
$$

\noindent {\bf Proof of the claim.} Since
$$
\frac{1}{\pi} \int_{-2}^2 \frac{dx}{x^2+1} > 0.7,
$$
for $z$ sufficiently close to the unit circle, the harmonic measure of the arc $2I(z) \subset \partial \mathbb{D}$ as viewed from $z$ exceeds $2/3$. By the conformal invariance of the harmonic measure, the image of the complementary arc $\partial \mathbb{D} \setminus 2 I(z)$ under the M\"obius transformation
\begin{equation*}
\xi \to \frac { 1- \overline{z} \xi}{\xi - z} \, : \, (\mathbb{D}, z) \, \to \, (\widehat{\mathbb{C}} \setminus \overline{\mathbb{D}}, \infty)
\end{equation*}
is an arc on the unit circle of length less than $2\pi/3$. Consequently,
$$
\biggl | \int_{\partial \mathbb{D} \setminus 2 I(z)}  \frac{1-|z|^2}{|\xi-z|^2 \frac{\xi-z}{1-\overline{z}\xi}} \, d\mu(\xi) \biggl | 
\, \ge \,
\frac{1}{2} \int_{\partial \mathbb{D} \setminus 2I(z)}  \frac{1-|z|^2}{|\xi-z|^2} \, d\mu(\xi). 
$$
In addition, we have the estimate 
\begin{equation*}
\int_{2I(z)}  \frac{1-|z|^2}{|\xi-z|^2} \, d\mu(\xi) \leq 4 \cdot  \frac{\mu (2 I(z))}{|I(z)|}. 
\end{equation*}
Putting the above facts together, we obtain
\begin{align*}
(1-|z|^2) |\nabla u(z)| & = 2 \, \biggl | \int_{\partial \mathbb{D}} \frac{1-|z|^2}{|\xi-z|^2 \frac{\xi-z}{1-\overline{z}\xi}} \, d\mu(\xi) \biggr | \\
& \ge 2 \, \biggl | \int_{\partial \mathbb{D} \setminus 2 I(z)}  \frac{1-|z|^2}{|\xi-z|^2 \frac{\xi-z}{1-\overline{z}\xi}} \,  d\mu(\xi) \biggl | \, - \, 4 \cdot \frac{\mu(2I(z))}{|I(z)|} \\
& \ge \int_{\partial \mathbb{D} \setminus 2I(z)}  \frac{1-|z|^2}{|\xi-z|^2}  \, d\mu(\xi) - 4 \cdot \frac{\mu(2I(z))}{|I(z)|} \\
& \ge \int_{\partial \mathbb{D}}  \frac{1-|z|^2}{|\xi-z|^2} \, d\mu(\xi) - 8 \cdot \frac{\mu(2I(z))}{|I(z)|}.
\end{align*}
Since $2I(z)$ is not heavy for $\mu$, we have
$$
\frac{\mu (2I(z))}{|I(z)|} < \frac{2}{100} \, u(z_{2I}). 
$$
Applying Harnack's inequality $u(z_{2I}) \leq 3\, u(z)$, we deduce
$$  (1-|z|^2) |\nabla u(z)| \, \ge \, u(z) - \frac{16}{100} \, u(z_{2I})
\, \ge \, u(z) - \frac{48}{100} \, u(z)
\, \ge \,  \frac{1}{2}\, u(z). 
$$

The claim tells us that if $I=I(z)$ is light with respect to $\mu$, then $z$ is a bounded hyperbolic distance away from the point $z_{I/2}$ where the hyperbolic derivative $D_h F$ is bounded  below.

It remains to deal with the case when $I=I(z)$ is a heavy with respect to $\mu$. By the hypothesis, there is an arc $J \subset I(z)$, $|J| \ge \delta |I(z)|$ such that
$$
\frac{\mu(J)}{|J|} \le \varepsilon \, \frac{\mu(I(z))}{|I(z)|}.
$$
We again consider two cases:
\begin{enumerate}
\item If $J$ is light with respect to $\mu$, then the claim tells us that $z$ is a bounded hyperbolic distance away from the point $z_{J/2}$ where the hyperbolic derivative is bounded from below.

\item On the other hand, if $J$ is heavy for $\mu$, then
$$
u(z_J) \, \le \, 100 \, \frac{\mu(J)}{|J|} \, \le \, 100 \varepsilon \, \frac{\mu(I(z))}{|I(z)|} \, \lesssim \, 100 \varepsilon \cdot u(z).
$$
Assuming that $\varepsilon > 0$ is small enough so that $u(z_J) < u(z)/2$, there exists a point $w$ on the hyperbolic segment joining $z$ and $z_J$ such that
$$
2\, D_h F(w) \, = \, (1-|w|^2) \frac{|\nabla u(w)|}{u(w)} \, \gtrsim \, 1.
$$
\end{enumerate}
The proof is complete.
\end{proof}

\section{Zero sets}

In this section, we show (\ref{item:hyperbolic-diameter}) $\Rightarrow$ (\ref{item:zero-sets}) $\Rightarrow$ (\ref{item:hyperbolic-derivative}), thereby characterizing the zero sets of BC mappings. Blaschke products whose zeros satisfy the Carleson condition, see item (b) in Condition \eqref{item:zero-sets}, are known as {\em Carleson-Newman Blaschke products}\/. The following lemma is well-known:

\begin{lemma}
\label{carleson-newman}
Suppose $F$ is a Carleson-Newman Blaschke product with zeros $\{z_n \}$.  For any $\varepsilon > 0$, there exists a $\delta > 0$ such that 
\begin{equation}
\label{CN}
\{ z \in \mathbb{D}: |F(z)| < \delta \} \subset \bigcup B_{h} (z_n , \varepsilon).     
\end{equation}
\end{lemma}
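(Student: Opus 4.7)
The plan is to establish the contrapositive: if $z \in \mathbb{D}$ has hyperbolic distance at least $\varepsilon$ from every zero $z_n$ of $F$, then $|F(z)|$ is bounded below by a positive constant $\delta = \delta(\varepsilon, F)$.

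First, I would translate the hypothesis from the hyperbolic to the pseudohyperbolic metric. Writing $\rho(z,w) = |B_w(z)| = \bigl|\tfrac{z-w}{1-\overline{w}z}\bigr|$ and recalling $\rho = \tanh(d_h/2)$, the condition $d_h(z,z_n) \ge \varepsilon$ for all $n$ becomes
\[
|B_{z_n}(z)| \ge \rho_0 := \tanh(\varepsilon/2) > 0, \qquad \text{for every } n.
\]

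Second, I would estimate each factor in the Blaschke product. Since $-\log x / (1-x^2) \to 1/2$ as $x \to 1$ and is continuous on $[\rho_0, 1)$, there exists a constant $C(\rho_0)$ depending only on $\rho_0$ such that $-\log x \le C(\rho_0)(1-x^2)$ for all $x \in [\rho_0, 1)$. Applying this to each factor and using the identity $1 - |B_{z_n}(z)|^2 = \tfrac{(1-|z|^2)(1-|z_n|^2)}{|1-\overline{z_n} z|^2}$, I obtain
\[
-\log|F(z)| \,=\, \sum_n -\log|B_{z_n}(z)| \,\le\, C(\rho_0) \sum_n \frac{(1-|z|^2)(1-|z_n|^2)}{|1-\overline{z_n} z|^2}.
\]

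Third, I would invoke the standard equivalence between the Carleson measure condition on $\sigma = \sum (1-|z_n|^2)\delta_{z_n}$ and the uniform boundedness of the invariant Poisson integral, namely
\[
\sup_{z\in \mathbb{D}} \int_{\mathbb{D}} \frac{1-|z|^2}{|1-\overline{z} w|^2}\, d\sigma(w) \,=\, \sup_{z \in \mathbb{D}} \sum_n \frac{(1-|z|^2)(1-|z_n|^2)}{|1-\overline{z_n} z|^2} \,=: M \,<\, \infty.
\]
Combining the two bounds gives $-\log|F(z)| \le C(\rho_0) M$, hence $|F(z)| \ge e^{-C(\rho_0) M} =: \delta$, as required.

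There is no substantive obstacle: the argument is a routine application of the Carleson embedding theorem. The only minor point requiring care is verifying the elementary inequality $-\log x \le C(\rho_0)(1-x^2)$ on an interval bounded away from $0$, and verifying that the Carleson condition on $\sigma$ is indeed equivalent to the uniform boundedness of the invariant Poisson integral (this is a well-known consequence of testing the Poisson kernel against the characteristic functions of Carleson squares).
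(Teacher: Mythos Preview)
Your proof is correct and in fact more self-contained than the paper's. The paper argues indirectly: it cites Kerr-Lawson's lemma for the case of interpolating Blaschke products, then invokes the decomposition of any Carleson--Newman Blaschke product as a finite product $F = F_1 \cdots F_k$ of interpolating ones, and multiplies the resulting $\delta_i$. Your argument bypasses both external ingredients by going straight from the Carleson condition to the uniform bound on $\sum_n (1-|z|^2)(1-|z_n|^2)/|1-\overline{z_n}z|^2$ and then to a lower bound on $|F(z)|$. Besides being more direct, your route yields an explicit $\delta = \exp(-C(\rho_0)M)$ in terms of the Carleson constant $M$, and it dovetails with the paper's own Lemma~\ref{qfactor} and Corollary~\ref{noucoro}, which establish the comparison $1-|F(z)| \asymp \sum_n (1-|z|^2)(1-|z_n|^2)/|1-\overline{z_n}z|^2$ away from the zeros under exactly the same hypotheses. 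The paper's approach, by contrast, packages the work into known results and keeps the section short.
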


\begin{proof}[Proof]
For interpolating sequences, \eqref{CN} is just \cite[Lemma 1]{kerr-lawson}. Any Carleson-Newman Blaschke product can be expressed as a finite product of interpolating Blaschke products: $F = F_1 F_2 \cdots F_k$.
If for a given $\varepsilon$, the inclusion \eqref{CN} holds for $F_i$ with constant $\delta_i$, then it also holds for $F$ with
$\delta = \delta_1 \delta_2 \dots \delta_k$.
\end{proof}

\begin{remark}
From the Carleson condition, it is not difficult to see that if $F$ is a Carleson-Newman Blaschke product, then every point $z \in \mathbb{D}$ lies within a bounded hyperbolic distance of a point $w \in \mathbb{D}$ with $d_h(w, F^{-1}(0)) > 2/3$. Moreover, $w$ can be chosen so that $Q(w) \subset Q(z)$. However, one should be slightly careful since the Carleson condition does not imply that every point $z \in \mathbb{D}$ is a bounded hyperbolic distance away from a point $w \in \mathbb{D}$ for which $|F(w)| > 2/3$. 
\end{remark}

\begin{lemma}
\label{qfactor}
Fix an $0 < \eta < 1$. Let $F$ be a Blaschke product. If $|F(z)| > \eta > 0$, then
\begin{equation}
\label{eq:qfactor2}
1 - |F(z)|^2 \asymp \sum_{F(z_n) = 0} \frac{(1-|z|^2)(1-|z_n|^2)}{|1 - \overline{z}_n z |^2}.
\end{equation}
\end{lemma}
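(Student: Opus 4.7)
The plan is to reduce the whole statement to a single algebraic identity about Blaschke factors. Writing $b_n(z) = \frac{|z_n|}{z_n}\frac{z_n - z}{1-\overline{z}_n z}$ for the Blaschke factor at $z_n$, the classical identity
$$
1 - |b_n(z)|^2 \,=\, \frac{(1-|z|^2)(1-|z_n|^2)}{|1-\overline{z}_n z|^2}
$$
shows that the right-hand side of \eqref{eq:qfactor2} is precisely $\sum_n (1 - |b_n(z)|^2)$. So the task reduces to proving
$$
1 - |F(z)|^2 \,\asymp\, \sum_n (1 - |b_n(z)|^2)
$$
under the hypothesis $|F(z)| > \eta$.

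The key tool I would use is the telescoping formula
$$
1 - \prod_n x_n \,=\, \sum_n (1 - x_n) \prod_{k<n} x_k, \qquad 0 \le x_n \le 1,
$$
applied (after passing to a limit from finite products) to $x_n = |b_n(z)|^2$. This gives the exact identity
$$
1 - |F(z)|^2 \,=\, \sum_n (1 - |b_n(z)|^2) \prod_{k<n} |b_k(z)|^2.
$$
The upper bound $1 - |F(z)|^2 \le \sum_n (1 - |b_n(z)|^2)$ is then immediate since every factor $|b_k(z)|^2 \le 1$. For the lower bound, I observe that $\prod_{k<n}|b_k(z)|^2 \ge \prod_k |b_k(z)|^2 = |F(z)|^2 > \eta^2$ (any tail of the product is bounded below by the whole product), which gives
$$
1 - |F(z)|^2 \,\ge\, \eta^2 \sum_n (1 - |b_n(z)|^2).
$$
Together with the upper bound, this yields the asserted $\asymp$ with implicit constants depending only on $\eta$.

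There is essentially no real obstacle here; the only minor point is justifying the telescoping identity for an infinite product, which follows by taking $N \to \infty$ in the finite identity for $\prod_{n \le N} |b_n(z)|^2$ and using that these partial products decrease monotonically to $|F(z)|^2 > 0$. I would state the telescoping identity as a one-line algebraic remark and then write the two displayed inequalities. The whole argument should take under half a page.
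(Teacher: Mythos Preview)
Your argument is correct. The telescoping identity
\[
1 - \prod_{n\le N} x_n \;=\; \sum_{n\le N}(1-x_n)\prod_{k<n}x_k
\]
with $x_n=|b_n(z)|^2$ passes to the limit exactly as you say (the partial products decrease monotonically to $|F(z)|^2>0$), and the two-sided bound with constants $1$ and $\eta^2$ follows immediately.

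The paper takes a slightly different but equally short route: instead of telescoping the product, it passes to logarithms via $\log|F(z)|=\sum_n\log|m_n(z)|$ and then uses that $1-t^2$ and $\log(1/t)$ are comparable for $t\in(\eta,1]$. This converts the multiplicative structure into an additive one in a single step. Your telescoping approach has the advantage of yielding explicit constants ($1$ and $\eta^2$) and avoids any appeal to the comparability of $1-t^2$ with $\log(1/t)$; the paper's approach has the mild conceptual appeal that it works directly at the level of $\log|F|$, which is how one usually differentiates Blaschke products elsewhere in the paper. Both arguments rest on the same single-factor identity $1-|b_n(z)|^2=(1-|z|^2)(1-|z_n|^2)/|1-\overline{z}_n z|^2$, and both are under half a page.
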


\begin{proof}
When $F(z) = m(z) = e^{i\theta} \cdot \frac{z-a}{1-\overline{a}z}$ is a M\"obius transformation, (\ref{eq:qfactor2}) is an equality by the Schwarz lemma:
$$
\frac{1 - |m(z)|^2}{1-|z|^2} \, = \, |m'(z)| \, = \, \frac{1-|a|^2}{|1-\overline{a}z|^2}.
$$
In the general case, when $F = \prod m_n$ is a product of M\"obius transformations,
$$
\log |F(z)| = \sum \log |m_n(z)|.
$$
The assumption implies that $|m_n(z)| > \eta$  for any $n = 1, 2, \dots$. Since the functions $1-t^2$ and $\log(1/t)$ are comparable for $t \in (\eta,1]$, we have
$$
\frac{1 - |F(z)|^2}{1-|z|^2} \, \asymp \, \sum \frac{1 - |m_n(z)|^2}{1-|z|^2} \, = \, \sum_{F(z_n) = 0} \frac{1-|z_n|^2}{|1 - \overline{z}_n z |^2},
$$
as desired.
\end{proof}

The above lemma can also be derived from \cite[Theorem 3.5]{mashreghi}. Combining Lemmas \ref{carleson-newman} and \ref{qfactor}, we obtain the following result:

\begin{corollary}
\label{noucoro}
Let $F$ be a Carleson-Newman Blaschke product. Then,
\begin{equation}
\label{eq:C-N}
1 - |F(z)| \asymp \sum_{F(z_n) = 0} \frac{(1-|z|^2)(1-|z_n|^2)}{|1 - \overline{z}_n z |^2}, \qquad z \in \mathbb{D}.
\end{equation}
\end{corollary}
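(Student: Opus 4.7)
The plan is to split into two cases based on the magnitude of $|F(z)|$, using Lemma \ref{carleson-newman} to provide a threshold $\delta > 0$ such that $|F(z)| < \delta$ forces $z$ to be hyperbolically close to some zero of $F$. More precisely, I would fix $\varepsilon = 1/2$ (say) and apply Lemma \ref{carleson-newman} to obtain $\delta > 0$ such that whenever $|F(z)| < \delta$, there is a zero $z_n$ with $d_h(z, z_n) < 1/2$. Throughout, note that $1 - |F(z)|^2 = (1 + |F(z)|)(1 - |F(z)|) \asymp 1 - |F(z)|$ since $1 + |F(z)| \in [1, 2]$, so the distinction between $1 - |F(z)|$ and $1 - |F(z)|^2$ is harmless.

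In the first case, $|F(z)| \ge \delta$, the estimate (\ref{eq:C-N}) follows immediately from Lemma \ref{qfactor} applied with $\eta = \delta$ (the implied constants depending on the fixed choice of $\delta$).

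In the second case, $|F(z)| < \delta$, the left-hand side $1 - |F(z)|$ lies in $[1 - \delta, 1]$, so it is comparable to $1$, and the task reduces to showing the sum on the right is also $\asymp 1$. For the lower bound, the assumption $d_h(z, z_n) < 1/2$ for some $n$ yields $(1 - |z|^2) \asymp (1 - |z_n|^2)$ and $|1 - \overline{z}_n z| \asymp (1 - |z_n|^2)$ (with universal constants), so the single term corresponding to $z_n$ is already $\asymp 1$. For the upper bound I would invoke the remark preceding Lemma \ref{qfactor}: every $z \in \mathbb{D}$ lies within bounded hyperbolic distance $C_0$ of some $w$ with $d_h(w, F^{-1}(0)) > 2/3$; by Lemma \ref{carleson-newman} (applied with $\varepsilon = 2/3$), such $w$ satisfies $|F(w)| \ge \delta_1$ for some constant $\delta_1 > 0$. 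Then Lemma \ref{qfactor} at $w$ bounds the analogous sum at $w$ by $1 - |F(w)|^2 \le 1$, and since $d_h(z, w) \le C_0$ the quantities $\frac{(1 - |z|^2)(1 - |z_n|^2)}{|1 - \overline{z}_n z|^2}$ and $\frac{(1 - |w|^2)(1 - |z_n|^2)}{|1 - \overline{z}_n w|^2}$ are comparable term-by-term uniformly in $n$. (Alternatively, one can bypass this step entirely by noting that the Carleson-Newman hypothesis makes $\sigma = \sum (1 - |z_n|^2)\delta_{z_n}$ a Carleson measure, from which the standard dyadic decomposition gives the uniform bound $\sum_n \frac{(1 - |z|^2)(1 - |z_n|^2)}{|1 - \overline{z}_n z|^2} \lesssim \|\sigma\|_{\mathrm{CN}}$.)

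The main obstacle, such as it is, is the uniform upper bound on the sum in the regime where $|F(z)|$ is small: one cannot directly apply Lemma \ref{qfactor} at $z$ itself, which forces the detour through a nearby point $w$ (or, equivalently, an appeal to the Carleson embedding). Once this transfer is done cleanly, the two-case argument closes.
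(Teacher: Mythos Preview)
Your proposal is correct and follows essentially the same approach as the paper, which simply states that the corollary is obtained by ``combining Lemmas \ref{carleson-newman} and \ref{qfactor}'' without spelling out the details. Your two-case split according to whether $|F(z)| \ge \delta$ or $|F(z)| < \delta$, together with the transfer to a nearby point $w$ (or the Carleson embedding alternative) for the upper bound in the second case, is exactly the natural way to flesh out that one-line justification.
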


Finally, we will need a distortion estimate for the pseudohyperbolic distance:

\begin{lemma}
\label{pseudohyperbolic}
Let $\rho(z, w) = |z-w| / |1-\overline{z}w| $ denote the pseudohyperbolic distance on the unit disk. Then:
\begin{enumerate}
\item[{\em (i)}] For any two points $z, w \in \mathbb{D}$, we have:
$$
1 - \rho(z, w)^2 = \frac{(1 - |z|^2)(1 - |w|^2)}{|1 - \overline{w}z |^2}.
$$
  
\item[{\em (ii)}] For any three points $z, w_1, w_2 \in \mathbb{D}$, we have:
$$
1 - \rho(z, w_2)^2 \le \frac{1 + \rho(w_1, w_2)}{1 - \rho(w_1, w_2)} \cdot (1 - \rho(z, w_1)^2 ).
$$
\end{enumerate}
\end{lemma}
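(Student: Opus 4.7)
The plan is to treat the two parts separately. Part (i) is a direct algebraic identity, while part (ii) follows by reducing to the case $w_1 = 0$ via Möbius invariance and then applying (i).

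For (i), I would expand $|z-w|^2 = (z-w)(\overline{z}-\overline{w}) = |z|^2 - z\overline{w} - \overline{z}w + |w|^2$ and $|1-\overline{w}z|^2 = 1 - \overline{w}z - w\overline{z} + |z|^2|w|^2$, subtract, and observe
\[
|1-\overline{w}z|^2 - |z-w|^2 = 1 - |z|^2 - |w|^2 + |z|^2|w|^2 = (1-|z|^2)(1-|w|^2).
\]
Dividing by $|1-\overline{w}z|^2$ gives the claim.

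For (ii), the key input is that $\rho$ is invariant under Möbius automorphisms of $\mathbb{D}$: indeed, if $\tau \in \aut(\mathbb{D})$, then $\rho(\tau \zeta_1, \tau \zeta_2) = \rho(\zeta_1, \zeta_2)$. Apply the automorphism $\tau = m_{w_1 \to 0}$ and set $a = \tau(z)$, $b = \tau(w_2)$. Then $|a| = \rho(z, w_1)$, $|b| = \rho(w_1, w_2)$, and $\rho(a,b) = \rho(z, w_2)$. So (ii) reduces to the inequality
\[
1 - \rho(a,b)^2 \le \frac{1+|b|}{1-|b|}\bigl(1-|a|^2\bigr), \qquad a, b \in \mathbb{D}.
\]
Using part (i) at the pair $(a,b)$, the left-hand side equals $(1-|a|^2)(1-|b|^2)/|1-\overline{b}a|^2$. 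After cancelling $1-|a|^2$ and factoring $1-|b|^2 = (1-|b|)(1+|b|)$, the claim is equivalent to $|1-\overline{b}a|^2 \ge (1-|b|)^2$, i.e.\ $|1-\overline{b}a| \ge 1-|b|$. This is immediate from the reverse triangle inequality, since
\[
|1-\overline{b}a| \ge 1 - |\overline{b}a| = 1 - |a|\,|b| \ge 1 - |b|.
\]

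There is no genuine obstacle here; the only small point to watch is applying the Möbius invariance consistently to all three pseudohyperbolic distances so that the problem genuinely reduces to the base point $w_1 = 0$.
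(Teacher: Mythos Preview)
Your proof is correct. For part (i) you carry out the same direct computation the paper has in mind. For part (ii) the paper takes a slightly different route: it invokes the strong reverse triangle inequality for the pseudohyperbolic metric,
\[
\rho(z,w_2) \ge \frac{\rho(z,w_1)-\rho(w_1,w_2)}{1-\rho(z,w_1)\rho(w_1,w_2)},
\]
citing Garnett, and then deduces the stated bound on $1-\rho(z,w_2)^2$ algebraically. Your argument bypasses this citation by applying M\"obius invariance to send $w_1$ to $0$ and then using only part (i) together with the elementary estimate $|1-\overline{b}a|\ge 1-|b|$. The two approaches are close cousins---the pseudohyperbolic reverse triangle inequality is itself proved via M\"obius invariance---but yours is more self-contained and avoids the external reference, while the paper's version keeps the pseudohyperbolic triangle inequality visible as the underlying mechanism.
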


The first statement is a straightforward calculation,
while the second statement follows from the reverse triangle inequality for the pseudohyperbolic distance
$$
\rho(z, w_2) \ge \frac{\rho(z, w_1) - \rho(w_1, w_2)}{1 - \rho(z, w_1) \rho(w_1, w_2)},
$$
e.g.~see \cite[Lemma 1.4]{garnett}.

\begin{proof} (\ref{item:hyperbolic-diameter}) $\Rightarrow$ (\ref{item:zero-sets}) By Lemma \ref{island}, $F$ is Blaschke product. We first show that $\sigma$ is a Carleson measure, so that $F$ is a Carleson-Newman Blaschke product. Indeed, if $\sigma$ were not a Carleson measure, then there would exist a sequence of  Carleson squares $\{Q_n\}$ such that 
$$
\frac{\sigma(Q_n)}{\ell(Q_n)} \to \infty.
$$
However, this would force the hyperbolic diameters of $F(B_h(z_{Q_n}, 1))$ to shrink to zero, contradicting (\ref{item:hyperbolic-diameter}). 

Fix an $\varepsilon > 0$ and let $Q$ be a heavy Carleson square. Using the equivalence of Conditions (\ref{item:hyperbolic-diameter}) and (\ref{item:hyperbolic-descent}) established in Section \ref{sec:geodesic-rays}, we find a sub-square $Q' \subset Q$, $\ell(Q') = 2^{-N} \ell(Q)$ such that
\begin{equation}
\label{eq:zc1}
1 - |F(z_{Q'})| \le \varepsilon (1 - |F(z_Q)|).
\end{equation}
Corollary \ref{noucoro} gives 
$$
\sum_{F(z_n) = 0} \frac{(1-|z_{Q'}|^2)(1-|z_n|^2)}{|1-\overline{z_n}z_{Q'}|^2} \lesssim \varepsilon  \sum_{F(z_n) = 0}  \frac{(1-|z_{Q}|^2)(1-|z_n|^2)}{|1-\overline{z_n}z_{Q}|^2}.
$$
Recall that for any Carleson square $Q'$, we have
$$
\frac{\sigma(Q')}{\ell(Q')} \lesssim  \sum_{F(z_n) = 0} \frac{(1-|z_{Q'}|^2)(1-|z_n|^2)}{|1-\overline{z_n}z_{Q'}|^2}.
$$
As $Q$ is heavy for $\mu$,
$$
\frac{\sigma(Q)}{\ell(Q)}  \geq \frac{1}{100}  \sum_{F(z_n) = 0} \frac{(1-|z_{Q}|^2)(1-|z_n|^2)}{|1-\overline{z_n}z_{Q}|^2}.
$$
Putting the above estimates together, we obtain $\frac{\sigma(Q')}{\ell(Q')} \lesssim \varepsilon \cdot \frac{\sigma(Q)}{\ell(Q)}$.

\medskip

(\ref{item:zero-sets}) $\Rightarrow$ (\ref{item:hyperbolic-derivative}). In order to verify (\ref{item:hyperbolic-derivative}), we show that there exist positive constants $R, c > 0$ such that for any point $z \in \mathbb{D}$, there exists a point $w \in \mathbb{D}$ with
$$
d_h(z, w) < R \qquad \text{and} \qquad D_h F(w) > c.
$$
By the remark following Lemma \ref{carleson-newman}, we may assume that $d_h (z , F^{-1} (0) ) $ is large.

\medskip

\noindent {\bf Claim.} 
Suppose $z \in \mathbb{D}$ is a point in the unit disk such that $d_h(z, F^{-1}(0)) > d_h(0, 2/3)$. If $2Q(z)$ is not heavy for $\sigma$, then $D_h F(z) \gtrsim 1$.

\medskip

\noindent {\bf Proof of the claim.} Connect the two endpoints of $2I(z)$ by a hyperbolic geodesic $\gamma \subset \mathbb{D}$ and let $\Gamma \supset \mathbb{D} \setminus 2Q(z)$ be the hyperbolic half-plane bounded by $\gamma$ which contains the origin. The image of $\Gamma$ under the M\"obius transformation
\begin{equation*}
\xi \to \frac { 1- \overline{z} \xi}{\xi - z} \, : \, (\mathbb{D}, z) \, \to \, (\widehat{\mathbb{C}} \setminus \overline{\mathbb{D}}, \infty)
\end{equation*}
is a lune in the exterior of the unit disk. By the harmonic measure estimate in the previous section, this lune is contained in a sector with a vertex at the origin, symmetric with respect to the ray $\{ -r z/|z|: 0 < r < \infty \}$ and having an opening angle less than $2\pi/3$. Consequently, if $\{z_n \}$ are the zeros of $F$,  
$$
\biggl | \sum_{\substack{F(z_n) = 0 \\ z_n \notin 2Q(z)}} \frac{(1-|z|^2)(1-|z_n|^2)}{|1-\overline{z_n}z|^2 \frac{z_n - z}{1-z_n\overline{z}}} \biggr | \ge
\frac{1}{2} \sum_{\substack{F(z_n) = 0 \\ z_n \notin 2Q(z)}} \frac{(1-|z|^2)(1-|z_n|^2)}{|1-\overline{z_n}z|^2 }.
$$

Logarithmic differentiation shows:
\begin{align*}
(1-|z|^2) \frac{|F'(z)|}{|F(z)|} & =
 \biggl | \sum_{F(z_n) = 0} \frac{(1-|z|^2)(1-|z_n|^2)}{|1-\overline{z_n}z|^2 \frac{z_n - z}{1-z_n\overline{z}}} \biggr | \\ 
 & \ge
 \biggl | \sum_{\substack{F(z_n) = 0 \\ z_n \notin 2Q(z)}} \frac{(1-|z|^2)(1-|z_n|^2)}{|1-\overline{z_n}z|^2 \frac{z_n - z}{1-z_n\overline{z}}} \biggr | - 
 \biggl | \sum_{\substack{F(z_n) = 0 \\ z_n \in 2Q(z)}} \frac{(1-|z|^2)(1-|z_n|^2)}{|1-\overline{z_n}z|^2 \frac{z_n - z}{1-z_n\overline{z}}} \biggr |
 \\ 
  & \ge
 \frac{1}{2} \sum_{\substack{F(z_n) = 0 \\ z_n \notin 2Q(z)}} \frac{(1-|z|^2)(1-|z_n|^2)}{|1-\overline{z_n}z|^2 } - 
 \frac{3}{2} \sum_{\substack{F(z_n) = 0 \\ z_n \in 2Q(z)}} \frac{(1-|z|^2)(1-|z_n|^2)}{|1-\overline{z_n}z|^2}
  \\ 
  & \ge
 \frac{1}{2} \sum_{F(z_n) = 0} \frac{(1-|z|^2)(1-|z_n|^2)}{|1-\overline{z_n}z|^2 } - 
2 \sum_{\substack{F(z_n) = 0 \\ z_n \in 2Q(z)}} \frac{(1-|z|^2)(1-|z_n|^2)}{|1-\overline{z_n}z|^2}.
\end{align*}
As $|1- \overline{z_n} z| \geq 1-|z|$,
$$
\sum_{\substack{F(z_n) = 0 \\ z_n \in 2Q(z)}} \frac{(1-|z|^2)(1-|z_n|^2)}{|1-\overline{z_n}z|^2}
\le 
8 \cdot \frac{\sigma(2Q(z))}{\ell(2Q(z))}.
$$
Since $2Q(z)$ is not heavy for $\sigma$, we have
\begin{align*}
\frac{\sigma(2Q(z))}{\ell(2Q(z))} & < \frac{1}{100} \sum_{F(z_n)=0} \frac{(1-|z_{2Q}|^2)(1-|z_n|^2)}{|1-\overline{z_n}z_{2Q}|^2} \\
& < \frac{3}{100} \sum_{F(z_n)=0} \frac{(1-|z|^2)(1-|z_n|^2)}{|1-\overline{z_n}z|^2},
\end{align*}
by Lemma \ref{pseudohyperbolic}. Putting the above equations together, we get
\begin{equation}
\label{eq:strange-equation}
(1-|z|^2) \frac{|F'(z)|}{|F(z)|} \ge 
\biggl (\frac{1}{2} - 2 \cdot 8 \cdot \frac{3}{100} \biggr )  \sum_{F(z_n) = 0} \frac{(1-|z|^2)(1-|z_n|^2)}{|1-\overline{z_n}z|^2},
\end{equation}
which is comparable to $1-|F(z)|^2$ by Corollary \ref{noucoro}. Since $d_h(z, F^{-1}(0)) \ge d_h(0, 2/3)$, Lemma \ref{carleson-newman} implies that $|F(z)| > \delta$ for some positive constant $\delta$. It follows from the above equation that the hyperbolic derivative of $F$ at $z$ is bounded below. This proves the claim.

\medskip

As explained previously, we may assume that $d_h (z , F^{-1} (0)) > 2/3$. By the claim, we may also assume that $Q(z)$ is heavy for $\sigma$, i.e.~
$$
\frac{\sigma(Q(z))}{\ell(Q(z))} \, \ge \, \frac{1}{100} \int_{\mathbb{D}} \frac{1-|z|^2}{|1-\overline{z} w|^2} \, d\sigma(w) \, \asymp \, 1 - |F(z)|.
$$
The hypothesis yields a Carleson square $Q' \subset Q(z)$ with 
$$
\ell(Q') \ge \delta \cdot \ell(Q) \qquad \text{and} \qquad 
\frac{\sigma(Q')}{\ell(Q')} \le \varepsilon \cdot \frac{\sigma(Q(z))}{\ell(Q(z))}.
$$
Adjusting the point $z_{Q'}$ a bounded hyperbolic distance if necessary, we may assume that $d_h(z_{Q'}, F^{-1}(0)) \ge d_h(0, 2/3)$. Applying the claim a second time, we may further assume that $Q'$ is heavy for $\sigma$, that is, 
$$
\frac{\sigma(Q')}{\ell(Q')} \, \ge \, \frac{1}{100} \int_{\mathbb{D}} \frac{1-|z_{Q'}|^2}{|1-\overline{z_{Q'}} w|^2} \, d\sigma(w) \, \asymp \, 1- |F(z_{Q'})|.
$$
Putting the above estimates together, we obtain
$$
1 - |F(z_{Q'})| \, \lesssim \, \frac{\sigma(Q')}{\ell(Q')} \, \le \, \varepsilon \cdot \frac{\sigma(Q(z))}{\ell(Q(z))} \, \lesssim \, \varepsilon (1-|F(z)|).
$$
Consequently, $1 - |F(z_{Q'})| \le (1-|F(z)|)/2$ if $\varepsilon > 0$ is sufficiently small. As the hyperbolic distance between $z$ and $z_{Q'}$ is bounded, there exists a point $w$ on the hyperbolic segment joining $z$ and $z_{Q'}$ such that $D_h F(w) \gtrsim \varepsilon > 0$.
\end{proof}

\section{A criterion for maximality}
\label{sec:maximality-criterion}

Recall from Lemma \ref{island} that any BC mapping is an indestructible Blaschke product. In this section, we strengthen this result by showing that every BC mapping is, in fact, a maximal Blaschke product.

It is easily verified that if $F$ is an analytic self-map of the unit disk, then the function
$$
u = \log \frac{2|F'|}{1-|F|^2}
$$
is a solution of the {\em Gauss curvature equation} (GCE)
$$
\Delta u = e^{2u} + 2\pi \sum_{c \in C} \delta_c,
$$
where $C = \crit F$. Conversely, Liouville's theorem's \cite[Theorem 2.7]{kraus} states that any solution of $\GCE_C$ with integral singularities is of the form
$$
u = \log \frac{2|F'|}{1-|F|^2},
$$
where $F$ is an analytic self-map of the unit disk with critical set $C$. Moreover, the corresponding Liouville map $F$ is determined uniquely up to post-composition with an automorphism of the unit disk.

As shown in \cite{heins}, see also \cite[Theorem 2.21]{kraus}, given a discrete set $C$ in the unit disk, if $\GCE_C$ has at least one solution, then it possesses a maximal solution $u_C$ which dominates all other solutions pointwise (the construction involves Perron's method). In this case, according to \cite[Theorem 1.2]{kraus}, the Liouville map $F_C$ of $u_C$ is a {\em maximal Blaschke product} with critical set $C$.

Before proving Theorem \ref{bounded-gap-theorem}, we present two applications to maximal Blaschke products:

\begin{corollary}
\label{like-heins-but-different}
Let $F$ be an analytic self-map of the unit disk. Suppose that for some $0 < \delta < 1$, each connected component of the set
$
E_\delta = \{z \in \mathbb{D} : (\lambda_F/\lambda_{\mathbb{D}})(z) \le \delta \}
$
is compactly contained in the unit disk. Then, $F$ is a maximal Blaschke product.
\end{corollary}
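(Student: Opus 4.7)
Let $C = \crit F$ and set $u = \log \bigl(2|F'|/(1-|F|^2)\bigr)$, which is a solution of $\GCE_C$. Since $\GCE_C$ admits at least one solution, the maximal solution $u_C$ exists, and its Liouville map $F_C$ is a maximal Blaschke product with $\crit F_C = C$. The strategy is to verify the hypothesis of Theorem \ref{bounded-gap-theorem} with $I = F_C$ (inner, being a Blaschke product) and $J = F$, conclude $F_C = \tau \circ F$ for some $\tau \in \aut(\mathbb{D})$, and therefore deduce that $F$ is itself a maximal Blaschke product.

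Set $v = u_C - u = \log(\lambda_{F_C}/\lambda_F)$. By the maximality of $u_C$, we have $v \ge 0$, so the ratio is bounded below by $1$. To obtain the matching upper bound, I observe that $v$ is subharmonic on $\mathbb{D}$: on $\mathbb{D} \setminus C$ both $u$ and $u_C$ satisfy $\Delta w = e^{2w}$, so $\Delta v = e^{2u_C} - e^{2u} \ge 0$; at each $c \in C$, the common logarithmic singularities of $u$ and $u_C$ (of order equal to the multiplicity of $c$) cancel, so $v$ extends continuously across $C$ and the distributional singularities $2\pi\delta_c$ in $\Delta u$ and $\Delta u_C$ cancel as well.

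Next I bound $v$ from above. On $\mathbb{D} \setminus E_\delta$, the Schwarz lemma for $F_C$ gives $\lambda_{F_C} \le \lambda_{\mathbb{D}} \le \delta^{-1} \lambda_F$, so $v \le \log(1/\delta)$ there. In particular, on the boundary of every connected component $\Omega$ of $E_\delta$, one has $v \le \log(1/\delta)$. Since each such $\Omega$ is compactly contained in $\mathbb{D}$ and $v$ is subharmonic and continuous on $\overline{\Omega}$, the maximum principle gives $v \le \log(1/\delta)$ throughout $\Omega$. Combining the two cases, $v$ is bounded on all of $\mathbb{D}$, which translates to
$$
\frac{|F_C'(z)|}{1-|F_C(z)|^2} \asymp \frac{|F'(z)|}{1-|F(z)|^2}, \qquad z \in \mathbb{D}.
$$

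Theorem \ref{bounded-gap-theorem} now yields an automorphism $\tau$ of $\mathbb{D}$ with $F_C = \tau \circ F$. Since post-composition with an automorphism preserves the quantity $|F'|/(1-|F|^2)$ (as Möbius transformations are hyperbolic isometries), we have $u = u_C$, so $u$ is the maximal solution and $F$ is a maximal Blaschke product, as claimed. The only delicate point in the argument is checking the subharmonicity of $v$ across the critical set, which I expect to be the main obstacle — one must verify that the pointwise cancellation of the log-singularities is compatible with the distributional equation, but this is standard for solutions of the Gauss curvature equation with integral singularities.
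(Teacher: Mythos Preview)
Your proof is correct and follows essentially the same approach as the paper: form the nonnegative difference $v = u_{F_C} - u_F$ of the two solutions of $\GCE_C$, observe it is subharmonic, bound it by $\log(1/\delta)$ outside $E_\delta$ via the Schwarz lemma, push the bound into each compactly contained component of $E_\delta$ by the maximum principle, and then invoke Theorem~\ref{bounded-gap-theorem}. The paper is terser about the cancellation of the logarithmic singularities at $C$ (which you rightly flag as the one point requiring care), but the structure of the argument is identical.
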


While the above criterion resembles the one in \cite[Section 25]{heins}, it is expressed in terms of hyperbolic derivatives, while Heins' criterion involves looking at the pre-images of small disks under $F$.

\begin{proof}
Suppose $F_C$ is the maximal Blaschke product with critical set $C = \crit F$. We want to show that $F = F_C$ up to post-composition with a M\"obius transformation. As usual, we write $u_F = \log \lambda_F$ and $u_{F_C} = \log \lambda_{F_C}$. As $u_F$ and $u_{F_C}$ satisfy $\GCE_C$, we have $u_F \le u_{F_C}$ by the maximality of $F_C$. The difference
$$
h = u_{F_C} - u_F
$$
is a subharmonic function since it has positive Laplacian. Outside $E_\delta$, the difference $h$ is bounded since
$$
u_F \, \ge \, u_{\mathbb{D}} - \log \frac{1}{\delta} \, \ge \,
u_{F_C} - \log \frac{1}{\delta}
$$
is already close to maximal. By the maximum principle, $h$ is bounded on whole the unit disk. Applying Theorem \ref{like-heins-but-different}, we conclude that there exists an automorphism $\tau$ of the unit disk such that  $ F = \tau \circ F_C$.
\end{proof}

\begin{corollary}
\label{apham-maximal}
Any analytic self-map of the unit disk which satisfies Condition {\em (\ref{item:distortion-crit})} is a maximal Blaschke product.
\end{corollary}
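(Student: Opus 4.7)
The strategy is to verify the hypothesis of Corollary \ref{like-heins-but-different}, namely that for some $0 < \delta < 1$ each connected component of $E_\delta = \{z \in \mathbb{D} : D_h F(z) \le \delta\}$ is compactly contained in $\mathbb{D}$. Let $N$ be the quasi-separation constant of $C = \crit F$, so that every hyperbolic ball of radius $1$ contains at most $N$ points of $C$. Pick $\varepsilon > 0$ with $\varepsilon < 1/(2N)$ and let $\delta = \delta(\varepsilon) > 0$ be the constant furnished by Condition (\ref{item:distortion-crit}). The contrapositive of that condition gives
$$
E_\delta \,\subset\, U_\varepsilon \,:=\, \bigcup_{c \in C} B_h(c, \varepsilon),
$$
so it suffices to show that every connected component $V$ of $U_\varepsilon$ has bounded hyperbolic diameter, as any such set is compactly contained in the unit disk.

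To bound the hyperbolic diameter of a component $V$ I would use a chain argument. The set $V$ is a union of overlapping $\varepsilon$-balls centered at critical points, and any two such centers belonging to $V$ can be joined by a chain $c_{j_1}, c_{j_2}, \dots, c_{j_m} \in C$ with consecutive balls overlapping, i.e. $d_h(c_{j_i}, c_{j_{i+1}}) < 2\varepsilon$. Then every $c_{j_i}$ lies within hyperbolic distance $2(m-1)\varepsilon$ of $c_{j_1}$. If we had $m \ge N+1$, our choice $\varepsilon < 1/(2N)$ would place the first $N+1$ elements of the chain inside $B_h(c_{j_1}, 1)$, contradicting quasi-separation. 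Hence any chain has length at most $N$, so $V$ has hyperbolic diameter at most $2N\varepsilon < 1$, and is therefore contained in a hyperbolic ball of uniformly bounded radius. Such a set is compactly contained in $\mathbb{D}$.

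With each component of $E_\delta$ compactly contained in $\mathbb{D}$, Corollary \ref{like-heins-but-different} applies and yields that $F$ is a maximal Blaschke product. The only technical point is the chain-length argument, where one must carefully combine the overlap condition with the purely local counting estimate provided by quasi-separation; once $\varepsilon$ is taken small relative to $1/N$, no chain can escape a single hyperbolic ball of radius $1$, and the obstruction disappears.
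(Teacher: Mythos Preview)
Your proof is correct and follows essentially the same approach as the paper: show that $E_\delta$ is contained in a small neighborhood of the quasi-separated critical set, whose components are then relatively compact, and invoke Corollary~\ref{like-heins-but-different}. You spell out explicitly (via the chain argument) why quasi-separation forces each component of $\bigcup_c B_h(c,\varepsilon)$ to have bounded hyperbolic diameter, a point the paper leaves implicit; the only cosmetic wrinkle is that the contrapositive of Condition~(\ref{item:distortion-crit}) gives $d_h(z,C)\le\varepsilon$ rather than $<\varepsilon$, so you should either use closed balls or apply the condition with a slightly smaller $\varepsilon$.
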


\begin{proof}
Since the critical set of $F$ is quasi-separated by Condition (\ref{item:distortion-crit}), when $\varepsilon > 0$ is sufficiently small, each connected component of $\bigcup_{c \in \crit F} \overline{B_h(c, \varepsilon)}$ is compactly contained in the unit disk. Also by Condition (\ref{item:distortion-crit}), for $\delta$ corresponding to $\varepsilon$,
$$
E_\delta \, = \, \{z \in \mathbb{D} : (\lambda_F/\lambda_{\mathbb{D}})(z) \le \delta \} \, \subset \, \bigcup_{c \in \crit F} \overline{B_h(c, \varepsilon)}.
$$
Consequently, the connected components of $E_\delta$ are also compactly contained in the unit disk. The result now follows from Corollary \ref{like-heins-but-different}.
\end{proof}

To prove Theorem \ref{bounded-gap-theorem}, we use the following facts:
\begin{enumerate}
\item[I.] According to Kato’s inequality  \cite[Proposition 6.6]{ponce}, if $u \in L^1_{\loc}$ and $\Delta u \in L^1_{\loc}$, then $\Delta u^+ \ge \chi_{\{u > 0\}} \cdot \Delta u$ in the sense of distributions, where as usual, $u^+ = \max(u,0)$ denotes the positive part of $u$.

\item[II.] If $0 \le u(z) \le C$ is a bounded subharmonic function on the unit disk, not identically zero, then
$$
\liminf_{r \to 1}  \,  \bigr| \{ \theta \in [0,2\pi) : u(re^{i\theta})  > 0 \} \bigr|  > 0.
$$
(If the above property is violated, then the sub-mean value property forces $u$ to be zero.)

\item[III.] Since an inner function $I$ has unimodular radial limits almost everywhere on the unit circle, for any $\varepsilon > 0$,
$$
\bigl | \{ \theta \in [0,2\pi) : | I(re^{i\theta}) | > 1 - \varepsilon \} \bigr | \, \to \, 2\pi,
$$
as $r \to 1$.
\end{enumerate}

\begin{proof}[Proof of Theorem \ref{bounded-gap-theorem}]
From \cite[Theorem 1.9]{square-functions}, it follows that $J$ is an inner function. By interchanging the roles of $I$ and $J$, it suffices to show that
$$
\frac{|I'(z)|}{1 - |I(z)|^2} \le  \frac{|J'(z)|}{1 - |J(z)|^2}, \quad z \in \mathbb{D}.
$$
Equivalently, we may show that for any $0 < \rho < 1$, the function
\begin{align*}
h_\rho(z) & = \biggl ( \log \frac{2\rho \, |I'(z)|}{1 - \rho^2 |I(z)|^2} - \log \frac{2 |J'(z)|}{1 - |J(z)|^2} \biggr )^+, \qquad z \in \mathbb{D},
\end{align*}
is identically zero. Evidently, the functions $\{ h_\rho(z) \}_{0 < \rho < 1}$ are non-negative. By the Schwarz lemma and the hypothesis of the theorem, they are uniformly bounded on the unit disk. Since their Laplacians are non-negative (by Fact I), each function $h_\rho(z)$ is subharmonic.

For the sake of contradiction, suppose that $h_\rho(z)$ is not identically zero for some $0 < \rho < 1$. 
By Facts II and III above, for any $\varepsilon > 0$,
$$
\liminf_{r \to 1} \ \bigl | \bigl  \{ \theta \in [0,2\pi) \, : \,  h_\rho(re^{i\theta}) > 0, \, | I(re^{i\theta}) | > 1 - \varepsilon \bigr \} \bigr | \, > \, 0.
$$
Evidently, for $\rho < \sigma < 1$, we have
$$
h_\sigma(z) - h_\rho(z) \ge \biggl ( \log \frac{2\sigma}{1-\sigma^2 |I(z)|^2} - \log \frac{2\rho}{1-\rho^2 |I(z)|^2} \biggr ) \, \chi_{\{ h_\rho > 0 \}} (z).
$$
When $|I(z)|$ is close to 1, with $\rho$ fixed and $\sigma$ close to 1, the expression in parentheses becomes arbitrary large. This contradicts the uniform boundedness of $\{ h_\sigma(z) \}_{0 < \sigma < 1}$. The proof is complete.
\end{proof}

\section{Critical sets}

In this section, we describe the possible critical sets of BC mappings by showing the equivalence of Conditions (\ref{item:distortion-crit}) and (\ref{item:critical-points}).

\begin{lemma}
\label{cs-jensen-bound}
Let $F$ be an analytic self-map of the unit disk with $F(0) = 0$ and $F'(0) \ne 0$. If the critical set of $F$ is quasi-separated, then
\begin{equation}
\label{eq:cs-jensen-bound}
\sum_{\crit F \, \cap \, B(0, r)} \log \frac{1}{|c|} \, \le \,  \frac{1}{2\pi} \int_{|z| < r} \bigl (1 - D_h^2 F(z) \bigr ) \, \frac{dA(z)}{1-|z|} + \log \frac{1}{|F'(0)|} + O(1).
\end{equation}
\end{lemma}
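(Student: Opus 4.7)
The plan is to apply Green's formula to the nonnegative function
\[
v(z) := \log \lambda_{\mathbb{D}}(z) - \log \lambda_F(z) = -\log D_h F(z), \qquad z \in \mathbb{D}.
\]
By the Gauss curvature equation from Section \ref{sec:maximality-criterion}, applied to both $\log \lambda_F$ and $\log \lambda_{\mathbb{D}}$ and subtracted,
\[
\Delta v = \lambda_{\mathbb{D}}^2 \, (1 - D_h^2 F) - 2\pi \sum_{c \in \crit F} m_c \, \delta_c,
\]
where $m_c$ is the order of vanishing of $F'$ at $c$. Since $F(0) = 0$ and $F'(0) \ne 0$, we have $v(0) = \log \frac{1}{|F'(0)|}$ and $0 \notin \crit F$. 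Applying Green's identity on $B(0, r)$ with Green's function $\frac{1}{2\pi} \log \frac{r}{|z|}$, and dropping the nonnegative boundary average of $v$, yields the preliminary upper bound
\[
\sum_{c \in \crit F \cap B(0, r)} m_c \log \frac{r}{|c|} \le \log \frac{1}{|F'(0)|} + \frac{1}{2\pi} \int_{B(0, r)} \log \frac{r}{|z|} \cdot \frac{4 \, \bigl(1 - D_h^2 F(z)\bigr)}{(1-|z|^2)^2} \, dA(z).
\]

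The next step is a pointwise estimate replacing the kernel $\log\frac{r}{|z|} \cdot \lambda_{\mathbb{D}}^2$ by $\frac{1}{1-|z|}$ with sharp leading constant $1$. Using $\log \frac{r}{|z|} \le \log \frac{1}{|z|} \le \frac{1-|z|}{|z|}$, the kernel is bounded by $\frac{4}{|z|(1-|z|)(1+|z|)^2}$. The algebraic identity
\[
4 - x(1+x)^2 = (1-x)(x^2 + 3x + 4), \qquad x = |z|,
\]
produces the decomposition
\[
\frac{4}{|z|(1-|z|)(1+|z|)^2} = \frac{1}{1-|z|} + \frac{|z|^2 + 3|z| + 4}{|z|(1+|z|)^2},
\]
whose second summand is bounded on $\{|z| \ge 1/2\}$, while on $\{|z| < 1/2\}$ the whole kernel is integrable and $1 - D_h^2 F \le 1$. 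Integrating and dividing by $2\pi$ gives
\[
\frac{1}{2\pi}\int_{B(0,r)} \log\frac{r}{|z|} \cdot \lambda_{\mathbb{D}}^2 \, (1 - D_h^2 F) \, dA \le \frac{1}{2\pi}\int_{B(0,r)} \frac{1 - D_h^2 F}{1-|z|} \, dA + O(1).
\]

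Finally, I convert $\sum \log \frac{r}{|c|}$ to $\sum \log \frac{1}{|c|}$ via the identity $\log \frac{1}{|c|} = \log \frac{r}{|c|} + \log \frac{1}{r}$, which adds an extra term $N(r) \log \frac{1}{r}$, where $N(r) = |\crit F \cap B(0, r)|$ counted with multiplicity. For $r \ge 1/2$, the quasi-separation of $\crit F$ together with the comparison of the hyperbolic area of $B(0, r)$ (which is $\approx 2\pi/(1-r)$) to the number of unit hyperbolic balls covering it gives $N(r) \lesssim 1/(1-r)$, while $\log \frac{1}{r} \lesssim (1-r)$, so their product is $O(1)$. For $r < 1/2$, the left-hand side is controlled by $\sum_{|c| < 1/2} \log \frac{1}{|c|}$, which is in turn bounded via Jensen's formula on $B(0, 1/2)$ and the Schwarz--Pick estimate $|F'(z)| \le 1/(1-|z|^2)$, yielding a quantity of the form $\log \frac{1}{|F'(0)|} + O(1)$.

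The main obstacle is getting the sharp coefficient $\frac{1}{2\pi}$ in front of the integral: a naive estimate such as $\log \frac{r}{|z|} \le 2(1-|z|)$ for $|z|$ near $1$ introduces an unwanted multiplicative factor of order $8$, and only the precise factorization of $4 - x(1+x)^2$ extracts the leading constant exactly, relegating everything else to the additive $O(1)$ error.
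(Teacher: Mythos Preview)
Your proof is correct and follows essentially the same route as the paper: apply Green/Jensen to $-\log D_h F$ on $B(0,r)$, replace the kernel $\log\frac{r}{|z|}\cdot\lambda_{\mathbb{D}}^2$ by $\frac{1}{1-|z|}$ up to an $O(1)$ error, and use quasi-separation to pass from $\log\frac{r}{|c|}$ to $\log\frac{1}{|c|}$. The only cosmetic differences are that the paper splits the Green step into two separate Jensen applications (to $\log\frac{1-|F|^2}{1-|z|^2}$ and to $\log|F'|$) combined via the Schwarz lemma, and handles the kernel replacement by a three-step approximation ($\log\frac{r}{|z|}\to\log\frac{1}{|z|}\to 1-|z|$, then $\frac{4}{(1-|z|^2)^2}\to\frac{1}{(1-|z|)^2}$) rather than your single algebraic identity.
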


\begin{proof}
Plugging in the identity
$$
\Delta \log \frac{1-|F(z)|^2}{1-|z|^2} = 4 \, \biggl ( \frac{1}{(1-|z|^2)^2} -  \frac{|F' (z)|^2}{(1-|F (z)|^2)^2} \biggr )
$$
into Jensen's formula
$$
\frac{1}{2\pi} \int_0^{2 \pi} u(re^{i\theta}) d\theta = u(0) + 
\frac{1}{2\pi} \int_{|z|<r} \Delta u(z) \log \frac{r}{|z|} \, dA (z),
$$
we obtain
\begin{equation*}
\label{eq:UD-jensen1}
\frac{1}{2\pi} \int_0^{2 \pi} \log  \frac{1-|F(re^{i\theta})|^2}{1- r^2} \, d\theta = \frac{1}{2\pi} \int_{|z| < r} \bigl (1 - D_h^2 F(z) \bigr ) \frac{dA(z)}{1-|z|} + O(1).
\end{equation*}
Here is a detailed justification of the $O(1)$ error term in the equation above:
\begin{itemize}
\item Since $\log (r/|z|) - \log (1/|z|) = \log r \asymp (1-r)$ and $\Delta u(z) \le \frac{4}{(1-|z|^2)^2}$,
$$
\biggl | \int_{|z|<r} \Delta u(z) \log \frac{r}{|z|} \, dA (z) \, - \,  \int_{|z|<r} \Delta u(z) \log \frac{1}{|z|} \, dA (z) \biggr |
$$
is $\lesssim (1-r) A_h(B(0,r)) = O(1)$.

\item Similarly, since $|\log(1/|z|) - (1-|z|)| \lesssim (1-|z|)^2$ and $\Delta u(z) \le \frac{4}{(1-|z|^2)^2}$, we have
$$
\biggl | \int_{|z|<r} \Delta u(z) \log \frac{1}{|z|} \, dA (z) \, - \, \int_{|z|<r} \Delta u(z) (1-|z|) \, dA (z) \biggr | \, = \, O(1).
$$
\item Finally, from 
\begin{align*}
\biggl | \frac{4}{(1-|z|^2)^2} - \frac{1}{(1-|z|)^2} \biggr | & \le \biggl | \frac{2}{1-|z|^2} + \frac{1}{1-|z|} \biggr | \cdot
\biggl | \frac{2}{1-|z|^2} - \frac{1}{1-|z|} \biggr | \\ & \lesssim \,  1/ (1-|z|)
\end{align*}
and $\Delta u(z) = \frac{4}{(1-|z|^2)^2} \cdot (1 - D_h^2 F(z) )$, we get
$$
\biggl | \int_{|z|<r} \Delta u(z) (1-|z|) \, dA (z) \, - \, \int_{|z|<r} (1 - D_h^2 F (z) ) \, \frac{dA (z)}{1-|z|} \biggr | \, = \, O(1).
$$
\end{itemize}
On the other hand,
\begin{equation*}
\label{eq:UD-jensen2}
\frac{1}{2\pi} \int_0^{2 \pi} \log |F'(re^{i\theta})| \, d\theta = \log |F'(0)| + \sum_{\crit F \, \cap \, B(0, r)} \log \frac{r}{|c|}.
\end{equation*}
By the Schwarz lemma,
\begin{equation}
\label{eq:UD-jensen3}
\sum_{\crit F \, \cap \, B(0, r)} \log \frac{r}{|c|} \, \le \,  \frac{1}{2\pi} \int_{|z| < r} \bigl (1 - D_h^2 F(z) \bigr ) \, \frac{dA (z)}{1-|z|}
+ \log \frac{1}{|F'(0)|} + O(1).
\end{equation}

It remains to replace $\log(r/|c|)$ with $\log(1/|c|)$ on the left hand side of (\ref{eq:UD-jensen3}). To do so, we examine the difference
\begin{equation}
\label{eq:UD-jensen4}
\sum_{\crit F \, \cap \, B(0, r)} \biggl ( \log \frac{1}{|c|} - \log \frac{r}{|c|} \biggr ) \, \asymp \, (1-r) \cdot \#\{ c \in B(0,r) : F'(c) = 0 \}.
\end{equation}
Since the critical set of $F$ is quasi-separated, $F$ has at most $O(1/(1-r))$ critical points in $B(0, r)$ and so the right hand side of (\ref{eq:UD-jensen4}) is $O(1)$. Adding this estimate to (\ref{eq:UD-jensen3}) yields the desired bound (\ref{eq:cs-jensen-bound}).
\end{proof}

\begin{remark}
Without the quasi-separation assumption, the number of critical points of $F$ inside $B(0,r)$ can be significantly larger. Indeed, using \cite[Theorem 6]{shapiro-shields}, one can construct an analytic self-map of the disk such that
$$
\limsup_{r \to 1} \frac { \#\{ c \in B(0,r) : F'(c) = 0 \} }{\frac{1}{1-r} \log \frac{1}{1-r}} > 0,
$$
in which case, (\ref{eq:cs-jensen-bound}) fails even with an $o(\log \frac{1}{1-r})$ error term.
\end{remark}

\begin{proof} (\ref{item:distortion-crit}) $\Rightarrow$ (\ref{item:critical-points}).
Since Condition (\ref{item:distortion-crit}) already assumes that the critical set $C = \crit(F)$ is quasi-separated and Corollary \ref{apham-maximal} ensures that $F$ is a maximal Blaschke product, it remains to verify that $D^+(C) < 1$.

A compactness argument shows that (\ref{item:distortion-crit}) implies a stronger form of Condition (\ref{item:hyperbolic-derivative}): there exist constants $c, \delta > 0$ such that every ball of hyperbolic radius 1 contains a sub-ball of radius $\delta$ on which the hyperbolic derivative $D_h F (z) > c$. Consequently, for some $\beta > 0$, 
$$
\int_{|z| < r} D_h^2 F(z) \, \frac{dA(z)}{1-|z|^2} \ge \beta \int_{|z| < r} \frac{dA(z)}{1-|z|^2} - O(1).
$$
From Lemma \ref{cs-jensen-bound}, it follows that $D^+(C) \le 1-\beta$. 

\medskip

(\ref{item:critical-points}) $\Rightarrow$ (\ref{item:distortion-crit}).
Let $C$ be a quasi-separated subset of the unit disk with $D^+(C) < 1$ and $F = F_C$ be a maximal Blaschke product with critical set $C$ (the existence of $F_C$ follows from Remark 9 in the introduction, as well as from the argument below). Assume that the hyperbolic distance $d_h(C, p) > \delta$. We want to show that the hyperbolic derivative
$D_h F(p) = (\lambda_{F}/\lambda_{\mathbb{D}})(p) > \varepsilon$.

By Möbius invariance, we may assume that $p = 0$ and $F(0) = 0$. With this normalization, our objective amounts to showing that $\lambda_F(0) = 2|F'(0)|$ is bounded below.

Choose $a$ so that $D^+(C) < a < 1$. By \cite[Lemma 4.1]{seip}, one can construct an analytic function $f(z)$ on the unit disk with
\begin{itemize}
\item $|f(z)| < 1/(1-|z|)^a$,
 \item $f$ vanishes on $C$,
\item $|f(0)| > \varepsilon_0 = \varepsilon_0(C, a) > 0$ is bounded from below by a positive constant.
\end{itemize}
Set $I_a := \int_0^1 \frac{dr}{(1-r)^a} < \infty$. It is easy to see that $$h(z) = (1/I_a) \int_0^z f(t) dt$$ is a function in the $H^\infty$ unit ball with $h(0) = 0$ and critical set $C$, while $|h'(0)| \ge \varepsilon_0/I_a$.
Since $F$ is a maximal Blaschke product,  $ \lambda_{F}(0) \ge \lambda_h(0) \ge 2 \varepsilon_0/I_a$ is bounded below as desired.
\end{proof}

{\small {\bf Acknowledgements.}
This research was supported by the Israel Science Foundation (grant 3134/21), the Generalitat de Catalunya (grant 2021 SGR 00071), the Spanish Ministerio de Ciencia e Innovaci\'on (project PID2021-123151NB-I00) and the Spanish Research Agency through the Mar\'ia de Maeztu Program (CEX2020-001084-M).}

\vspace{-4pt}

\bibliographystyle{amsplain}

\end{document}